\documentclass[10pt,twoside]{article}
\usepackage[left=25.4mm, right=25.4mm, top=25.4mm, bottom=25.4mm]{geometry}   
\usepackage[utf8]{inputenc}                              
\usepackage[english]{babel}                              
\usepackage[hidelinks]{hyperref}                         
\usepackage{booktabs}                                    
\usepackage{amsmath, amsfonts, stmaryrd}                 
\usepackage{caption, subcaption}                         
\usepackage{tikz-cd}                                     
\usepackage{enumitem}                                    
\usepackage{xcolor, graphicx}                            
\usepackage[noblocks]{authblk}                           
\usepackage{titlesec}                                    
\usepackage{fancyhdr}                                    
\usepackage{etoolbox}
\usepackage[noadjust]{cite}
\apptocmd{\thebibliography}{\setlength{\itemsep}{-3pt}}{}{}
\setlength{\parskip}{3pt}

\usepackage{amsthm}
\theoremstyle{plain}
\newtheorem{theorem}{Theorem}[section]

\newtheorem{proposition}[theorem]{Proposition}
\newtheorem{corollary}[theorem]{Corollary}

\theoremstyle{definition}
\newtheorem{definition}[theorem]{Definition}
\newtheorem{example}[theorem]{Example}

\theoremstyle{remark}

\newcommand{\R}{\mathbb R}
\newcommand{\C}{\mathbb C}
\newcommand{\Ll}{\mathbb L}
\newcommand{\Sph}{\mathbb S}
\newcommand{\G}{\mathcal G}
\newcommand{\J}{\mathcal J}
\newcommand{\N}{\mathcal N}
\newcommand{\F}{\mathcal F}

\newcommand{\Lie}{\mathcal L}

\newcommand{\xm}{\mathfrak X(M)}
\newcommand{\fm}{\mathfrak F(M)}
\newcommand{\dm}{\Omega^1 (M)}

\newcommand{\GTM}{\Gamma (\mathbb TM)}
\newcommand{\TM}{\mathbb TM}
\newcommand{\TTM}{TM\oplus T^*M}
\newcommand{\TpM}{\mathbb T_pM}

\newcommand{\var}{\varepsilon}


\makeatletter
\def\blfootnote{\gdef\@thefnmark{}\@footnotetext}
\makeatother

\pagestyle{fancy}
\fancyhf{}
\fancyhead[LE,RO]{\thepage}
\fancyhead[RE]{F. Etayo, P. Gómez-Nicolás, R. Santamaría}
\fancyhead[LO]{About generalized complex structures on $\mathbb S^6$}

\title{\bf About generalized complex structures on $\mathbb S^6$
  \blfootnote{Partially supported by a doctoral research grant ``Concepción Arenal'' (Universidad de Cantabria and Gobierno de Cantabria).}
  \blfootnote{\em E-mail addresses:}
  \blfootnote{Fernando Etayo: \href{mailto:fernando.etayo@unican.es}{fernando.etayo@unican.es}}
  \blfootnote{Pablo Gómez-Nicolás: \href{mailto:pablo.gomeznicolas@unican.es}{pablo.gomeznicolas@unican.es}}
  \blfootnote{Rafael Santamaría: \href{mailto:rsans@unileon.es}{rsans@unileon.es}}
}
\author[*]{Fernando Etayo}
\author[*]{Pablo Gómez-Nicolás}
\author[$\dag$]{Rafael Santamaría}
\affil[*]{\footnotesize Departamento de Matemáticas, Estadística y Computación, Facultad de Ciencias, Universidad de Cantabria, Avda. de los Castros, s/n, 39071, Santander, Spain}
\affil[$\dag$]{\footnotesize Departamento de Matemáticas, Escuela de Ingenierías Industrial, Informática y Aeroespacial, Universidad de León, Campus de Vegazana, 24071, León, Spain}
\date{\small \today}

\begin{document}

\maketitle

\begin{abstract}
\noindent We study the existence of generalized complex structures on the six-dimensional sphere $\mathbb S^6$. We work with the generalized tangent bundle $\mathbb T\mathbb S^6\to \mathbb S^6$ and define the integrability of generalized geometric structures in terms of the Dorfman bracket. Specifically, we prove that there is not a direct way to induce a generalized complex structure on $\mathbb S^6$ from its usual nearly Kähler structure inherited from the octonions product.
\end{abstract}

{\noindent\small {\bf 2020 Mathematics Subject Classification:} 53D18, 53C15}

{\noindent\small {\bf Keywords:} Generalized tangent bundle, generalized complex structure, Dorfman bracket, six-dimensional sphere, nearly Kähler structure}

\thispagestyle{empty}
\section{Introduction}
\label{SECTION:INTRODUCTION}

The problem of finding almost complex structures that are integrable on even-dimensional spheres $\Sph^{2n}$ was first presented by H. Hopf in \cite{HOPF1947}. Since then, multiple results concerning this topic have been presented: for example, A. Borel and J. P. Serre proved in \cite{BORELSERRE1953} that the only spheres admitting almost complex structures are $\Sph^2$ and $\Sph^6$. It is widely known that the two-dimensional sphere $\Sph^2$ admits an integrable complex structure, but there is not a definite result that determines whether the six-dimensional sphere $\Sph^6$ can be endowed with an integrable complex structure. This question is known as the ``Hopf problem''.

Multiple points of view have been adopted in order to solve the Hopf problem (see, for example, \cite{AGRICOLABAZZONIGOERTSCHESKONSTANTISROLLENSKE2018} for a historical point of view and \cite{AGRICOLABOROWKAFRIEDRICH2018} to know about the state of the art). One of these points of view concerns the search of partial results, studying the existence of complex structures on $\Sph^6$ that satisfy additional requirements. Following this line, one important result by A. Blanchard (see \cite{BLANCHARD1953}) shows that $\Sph^6$ does not admit any complex structure compatible with the Euclidean metric of $\R^7$. This result was popularized by C. LeBrun in \cite{LEBRUN1987}.

A question that has not been addressed yet, as far as we know, is the Hopf problem in generalized geometry. Generalized geometry was firstly introduced by N. Hitchin in \cite{HITCHIN2003}, and studied more deeply by M. Gualtieri in \cite{GUALTIERI2004}. This geometry field focuses on the generalized tangent bundle $\TM := TM\oplus T^*M\to M$ of a  manifold. The first researches about generalized geometry dealt with generalized almost complex structures (by M. Gualtieri, \cite{GUALTIERI2004}) and generalized almost paracomplex structures (by A. Wade, \cite{WADE2004}). Generalized almost complex (resp. paracomplex) structures were defined as bundle endomorphisms $\J\colon \TM\to \TM$ such that $\J^2 = -\mathcal Id$ (resp. $\J^2 = \mathcal Id$) and that met a compatibility condition with the canonical pairing $\G_0$, defined in Eq. \eqref{eq:canonicalmetric}. This compatibility condition was that $\G_0(\J u, v) = -\G_0(u, \J v)$ for every $u, v\in \TM$.

However, in later studies such as \cite{NANNICINI2006, NANNICINI2010} by A. Nannicini and \cite{IDAMANEA2017} by C. Ida and A. Manea this condition was omitted in order to study a broader range of interesting geometric structures. This point of view agrees with specialised text in vector bundles (see, for example, \cite{POOR1981} by W. A. Poor), and it allows the study of these geometric structures in a similar way than polynomial structures on a manifold. In this document, we consider both compatible and non-compatible generalized almost complex structures with $\G_0$, and name them strong and weak structures, respectively. It is worth remarking that the notion of weakness used throughout this document is not the same as in other references; for example, in \cite{ALDIGRANDINI2022} a different notion of weakness is introduced for any generalized polynomial structure.

The compatibility condition is especially important when studying the integrability of generalized polynomial structures. The integrability of a generalized almost complex or paracomplex structure is defined in terms of the Dorfman bracket, introduced in Eq. \eqref{eq:dorfmanbracket}; and it can be characterized using the generalized Nijenhuis map (see Eq. \eqref{eq:nijenhuis} for generalized almost complex structures). One interesting property of the generalized Nijenhuis map is that, as it is shown in Proposition \ref{prop:conditiontensornijenhuis} for generalized almost complex structures, it acts as a tensor when the structure is compatible with $\G_0$. In other words, even though the generalized Nijenhuis map associated to a weak generalized almost complex structure can be used to study its integrability (Proposition \ref{prop:nijenhuis}), it may not behave as a tensor (see the example after Proposition \ref{prop:conditiontensornijenhuis}). The problem in this case is that the integrability of the structure cannot be checked on local frames, as pointed out by M. Aldi, S. Da Silva and D. Grandini in \cite{ALDIDASILVAGRANDINI2024}, for example. However, we are able to obtain necessary local conditions for a weak generalized almost complex structure to be integrable (see Theorem \ref{thm:integrabilitycoord}).

On the other hand, weak structures allow us to define a greater number of interesting geometries. For example, when the manifold $M$ is endowed with an almost Hermitian structure $(J,g)$, numerous polynomial generalized structures can be induced from $J$, $g$ and its fundamental form $\omega(\cdot, \cdot) = g(J\cdot, \cdot)$ (see, for example, \cite{ETAYOGOMEZNICOLASSANTAMARIA2022}). In particular, the almost complex structure $J$ induces two generalized almost complex structures, defined as $\J_{\lambda,J}(X + \xi) = JX + \lambda J^*\xi$ for $\lambda\in \{1,-1\}$; the fundamental form $\omega$ defines the generalized almost complex structure $\J_\omega(X + \xi) = -\sharp_\omega\xi + \flat_\omega X$; and the metric $g$ produces the generalized almost complex structure $\J_g(X + \xi) = -\sharp_g\xi + \flat_g X$. It can also be checked that any spherical combination $a\J_{1,J} + b\J_g + c\J_\omega$, with $a,b,c\in \fm$ such that $a^2 + b^2 + c^2 = 1$, is also a generalized almost complex structure (see Corollary \ref{cor:sphericalcombination}).

M. Gualtieri proved that, for an almost complex structure $J$, the induced generalized almost complex structure $\J_{-1,J}$ is integrable if and only if $J$ is integrable in the classical way. Also, for an almost symplectic structure $\omega$, we have that $\J_\omega$ is an integrable generalized complex structure if and only if $\omega$ is symplectic. Therefore, any manifold that admits a complex structure or a symplectic structure admits an integrable generalized complex structure, and hence admits generalized complex structures. This is the case of the two-dimensional sphere $\Sph^2$, which admits a complex and a symplectic structure. In the case of the six-dimensional sphere $\Sph^6$, since $H^2(\Sph^6,\R) = 0$ we know that it does not admit a symplectic structure. In respect of strong generalized complex structures, even though it is not known if there is an integrable complex structure on $\Sph^6$ we can study the problem on the bundle $\mathbb T\Sph^6$ and determine if there are some particular integrable generalized complex structures. The main result that is proved in this part of the paper is summarized in the following statement:

\begin{theorem}
\label{thm:s6structures}
There are no weak generalized complex structures on $\Sph^6$ that can be written as a spherical combination of the induced generalized almost complex structures $\J_{1,J}, \J_g, \J_\omega$ from Eqs. {\normalfont (\ref{eq:complexlambdaJ}, \ref{eq:complexomega}, \ref{eq:complexg})}, where $(J, g)$ is the nearly Kähler structure on $\Sph^6$ inherited from the pure octonions product.
\end{theorem}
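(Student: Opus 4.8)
The plan is to argue by contradiction, exploiting crucially that any such spherical combination is a \emph{weak} structure, so that (by the discussion around Proposition \ref{prop:conditiontensornijenhuis}) its generalized Nijenhuis map need not be a tensor; it is precisely this failure that will pin the coefficients down. Suppose $\J=a\J_{1,J}+b\J_g+c\J_\omega$, with $a,b,c\in\fm$ and $a^{2}+b^{2}+c^{2}=1$, is an integrable generalized complex structure on $\Sph^6$. By Corollary \ref{cor:sphericalcombination} it is a weak generalized almost complex structure, and by Proposition \ref{prop:nijenhuis} its generalized Nijenhuis map $\N_\J$ vanishes on all sections of $\TM$. Comparing $\N_\J(fu,v)$ with $f\,\N_\J(u,v)$ via the Leibniz rule of the Dorfman bracket — which is exactly the source of the necessary local conditions of Theorem \ref{thm:integrabilitycoord} — and using that $df$ can be prescribed arbitrarily at a point, one obtains the pointwise identity
\begin{equation}
\bigl(\G_0(\J u,\J v)-\G_0(u,v)\bigr)\,\alpha \;=\; \bigl(\G_0(\J u,v)+\G_0(u,\J v)\bigr)\,\J\alpha ,
\label{eq:weakid}
\end{equation}
valid for every $p\in\Sph^6$, $u,v\in\TpM$ and $\alpha\in T_p^*\Sph^6$, where the second scalar on the right is precisely the obstruction to $\J$ being $\G_0$-compatible.

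Next I would substitute the explicit $\J$ into \eqref{eq:weakid} with $u=X$, $v=Y$ tangent vectors. Using only $\omega(\cdot,\cdot)=g(J\cdot,\cdot)$ and the $g$-invariance of $J$, a short computation with the canonical pairing gives $\G_0(\J X,Y)+\G_0(X,\J Y)=b\,g(X,Y)$ and $\G_0(\J X,\J Y)-\G_0(X,Y)=ac\,g(X,Y)$ (the $g$-cross-terms cancel, the $\omega$-cross-terms add up), so that \eqref{eq:weakid} reads $ac\,g(X,Y)\,\alpha=b\,g(X,Y)\,\J\alpha$. The left-hand side lies in $T^{*}\Sph^6$; projecting onto $T\Sph^6$ and using $\sharp_\omega=-J\circ\sharp_g$ (again just $\omega=g(J\cdot,\cdot)$), the $T\Sph^6$-component of $\J\alpha$ equals $-(b\,\mathcal{Id}-cJ)\sharp_g\alpha$, so choosing $X,Y$ with $g(X,Y)\neq0$ forces $b\,(b\,\mathcal{Id}-cJ)=0$. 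Taking the trace (and $\operatorname{tr}J=0$) gives $b^{2}=0$, hence $b\equiv0$; the identity then collapses to $ac\,g(X,Y)\,\alpha=0$, so $ac\equiv0$ as well.

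Then I would observe that $\J=a\J_{1,J}+c\J_\omega$ with $a^{2}+c^{2}=1$ and $ac\equiv0$, so $a^{2}$ is a continuous $\{0,1\}$-valued function on the connected manifold $\Sph^6$ and hence is constant. Either $a\equiv0$, giving $\J=\pm\J_\omega$, or $a\equiv\pm1$ and $c\equiv0$, giving $\J=\pm\J_{1,J}$; the sign is immaterial, since $\N_{-\K}=\N_{\K}$ for any generalized almost complex structure $\K$. In the first case integrability forces $d\omega=0$ (by Gualtieri's result recalled in the Introduction, or directly because $\J_\omega$ is $\G_0$-compatible); in the second case $\N_{\J_{1,J}}(X,Y)$ is, up to sign, the classical Nijenhuis tensor $N_J(X,Y)$, so integrability forces $N_J\equiv0$, i.e.\ $J$ integrable. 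Both conclusions are false for the octonionic nearly Kähler structure on $\Sph^6$: $J$ is not integrable (its Nijenhuis tensor vanishes nowhere, the structure being strictly nearly Kähler), and $\omega$ is not closed (indeed $d\omega=3\psi^{+}\neq0$; in any case $\Sph^6$ admits no symplectic form, as $H^{2}(\Sph^6;\R)=0$). This contradiction would prove the theorem.

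The step I expect to be the main obstacle — and the real content of the argument — is obtaining \eqref{eq:weakid} and recognising that it is the \emph{non-tensoriality} of $\N_\J$ for weak structures, rather than any finer integrability data, that rigidly constrains $a,b,c$ and collapses an a priori infinite-dimensional family of candidates onto the two classical structures $\pm\J_{1,J}$ and $\pm\J_\omega$. Once this is in place the rest is routine: the pairing computations together with the trace argument yielding $b\equiv0$ and $ac\equiv0$, the elementary topological observation that $a^{2}$ is constant, and the two standard facts that $J$ is non-integrable and $\omega$ non-closed on the octonionic $\Sph^6$.
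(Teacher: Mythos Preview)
Your argument is correct, and it takes a genuinely different route from the paper's own proof. The paper works entirely in spherical coordinates on $\Sph^6$, using the explicit matrix components of $g$ and $J$ and selecting particular index combinations in the necessary conditions \eqref{eq:integrability3}, \eqref{eq:integrability5}, \eqref{eq:integrability7} of Theorem~\ref{thm:integrabilitycoord}: first a symmetrisation of \eqref{eq:integrability7} is evaluated at two specific index triples to force $b\equiv 0$, and then (in Proposition~\ref{prop:bequal0}) a combination of \eqref{eq:integrability3} and \eqref{eq:integrability5} at two further index triples yields $ac\equiv 0$. You instead derive the coordinate-free identity
\[
\bigl(\G_0(\J u,\J v)-\G_0(u,v)\bigr)\,\alpha \;=\; \bigl(\G_0(\J u,v)+\G_0(u,\J v)\bigr)\,\J\alpha
\]
directly from the failure of $\fm$-linearity of $\N_\J$, and then a short pairing-and-trace computation (using only $g(J\cdot,J\cdot)=g(\cdot,\cdot)$, $\omega=g(J\cdot,\cdot)$ and $\operatorname{tr}J=0$) gives $b\equiv 0$ and $ac\equiv 0$ at every point. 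The final connectedness argument and the reduction to the non-integrability of $J$ and non-closedness of $\omega$ are the same in both proofs.

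What each approach buys: the paper's proof illustrates concretely how the local conditions of Theorem~\ref{thm:integrabilitycoord} are used in practice and ties the result to the explicit octonionic data on $\Sph^6$. Your approach is more conceptual and strictly more general: nothing in it uses coordinates or any feature of $\Sph^6$ beyond connectedness, so the same argument shows that on \emph{any} connected almost Hermitian manifold with $N_J\not\equiv 0$ and $d\omega\not\equiv 0$ there is no integrable spherical combination $a\J_{1,J}+b\J_g+c\J_\omega$. One small remark on phrasing: the identity you exploit is not literally ``the source of the necessary local conditions of Theorem~\ref{thm:integrabilitycoord}'' (those come from evaluating $\N_\J$ on a coordinate frame), but rather a complementary consequence of $\N_\J\equiv 0$ obtained by varying the $\fm$-coefficient; this is precisely the extra information one gains for weak structures beyond the frame conditions.
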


The paper is organized as follows:

In the first part of the paper, we introduce the basic concepts and fix the notation that is necessary to work with the generalized tangent bundle $\TM$ of a manifold. We distinguish between weak and strong generalized almost complex structures regarding their compatibility with the canonical metric that arises in $\TM$. We also take the definition of integrability of an endomorphism and analyse the behaviour of the generalized Nijenhuis map of a generalized almost complex structure, depending on whether the structure is weak or strong. Lastly, in Theorem \ref{thm:integrabilitycoord} we find necessary conditions for the integrability of a generalized almost complex structure working in local coordinates.

In the second part of the paper, we consider the particular case of the six-dimensional sphere $\Sph^6$ endowed with its well-known nearly Kähler structure, inherited from the pure octonions product on $\R^7$. Following the spirit of other partial results, our aim is to study the existence of a specific type of generalized complex structures. Working in local coordinates with the nearly Kähler structure on $\Sph^6$, and using spherical combinations of the induced generalized almost complex structures, we show that the necessary conditions found in Theorem \ref{thm:integrabilitycoord} are not fulfilled. In this way, we prove the Theorem \ref{thm:s6structures}.

It is worthwhile remarking that the problem of finding generalized paracomplex structures on $\Sph^6$ is extremely easy, since in every manifold it can be defined the canonical strong generalized paracomplex structure $\F_0(X + \xi) = X - \xi$, as A. Wade introduced in \cite{WADE2004}. Therefore, we just mention it and focus on generalized complex structures.

\section{Generalized almost complex structures and integrability}
\label{SECTION:PRELIMINARIES}

First, we recall the basic notations we are going to use. Throughout the paper, manifolds of dimension $n$ and tensor fields are smooth (i.e., $\mathcal C^\infty$), and Einstein summation convention will be used when we work in local coordinates. The \emph{generalized tangent bundle} of $M$ is defined as the Whitney sum of its tangent and cotangent bundle, namely $\TM := \TTM\to M$. On each point, the fibre of this vector bundle is the direct sum of its tangent and cotangent space, that is, $\TpM = T_pM\oplus T^*_pM$ for $p\in M$. The smooth sections $\GTM$ of this bundle can be written as sums of a vector field and a differential form on $M$; that is to say, $\GTM = \xm\oplus \dm$, and for any $u\in \GTM$ there are some unique $X\in \xm$ and $\xi\in \dm$ such that $u = X + \xi$.

In this work, we focus mainly on bundle endomorphisms $\J\colon \TM\to \TM$ such that each fibre of $\TM$ is mapped to itself. The following matrix notation is used to describe these morphisms. Any bundle endomorphism $\J$ of $\TM$ can be written as
	\begin{equation}
	  \J = \left(
		\begin{array}{cc}
		  A  & B  \\
		  C  & D
		\end{array}
	  \right),
	  \label{eq:matrixendomorphism}
	\end{equation}
for some bundle morphisms $A\colon TM\to TM$, $B\colon T^*M\to TM$, $C\colon TM\to T^*M$ and $D\colon T^*M\to T^*M$. This means that for any $X + \xi\in \TM$ the following expression can be written:
	\begin{equation*}
	  \J(X + \xi) = \left(
	    \begin{array}{cc}
	      A  & B  \\
	      C  & D
	    \end{array}
	  \right) \left(
	    \begin{array}{c}
	      X    \\
	      \xi
	    \end{array}
	  \right) = \left(
	    \begin{array}{c}
	      AX + B\xi  \\
	      CX + D\xi
	    \end{array}
	  \right) = (AX + B\xi) + (CX + D\xi)\in \TTM.
	\end{equation*}

On this vector bundle, there is a \emph{canonical generalized metric} that emerges without the need of adding any further structure to $M$. This morphism $\G_0\in \Gamma((\TM)^*\otimes (\TM)^*)$ is defined such that for each $X, Y\in T_pM$ and $\xi, \eta\in T^*_pM$,
    \begin{equation}
	  \G_0(X + \xi, Y + \eta) := \frac{1}{2}(\xi(Y) + \eta(X)).
	  \label{eq:canonicalmetric}
	\end{equation}
It is immediate to check that $\G_0$ is symmetric, nondegenerate, and its signature is $(n, n)$.

Although generalized almost complex structures were originally required to be isometric with respect to the canonical metric $\G_0$, later studies suggest that this condition may be omitted. In order to study a wider range of interesting geometric structures, but not to forget about this relevant property, we give the following definition.

\begin{definition}
\label{def:generalizedalmostcomplex}
A \emph{weak generalized almost complex structure} is a bundle endomorphism of the generalized tangent bundle $\J\colon \TM\to \TM$ such that $\J^2 = -\mathcal Id$. If for every $X + \xi, Y + \eta\in \TpM$, such an endomorphism $\J$ also satisfies
	\begin{equation}
	\G_0(\J(X + \xi),Y + \eta) = -\G_0(X + \xi, \J(Y + \eta)),
	\label{eq:strongcondition}
	\end{equation}
it will be called a \emph{strong generalized almost complex structure}.
\end{definition}

Some interesting examples of generalized almost complex structures appear when the base manifold is endowed with some geometric structure. These examples have been previously studied by other authors (see, for example, \cite{GUALTIERI2011, NANNICINI2010}).

\begin{example}
\label{ex:complexlambdaJ}
Let $(M,J)$ be an almost complex manifold, and $\lambda\in \{1,-1\}$. Then, the generalized almost complex structure
    \begin{equation}
      \J_{\lambda, J} =
      \left(
        \begin{array}{cc}
          J  & 0  \\
          0  & \lambda J^*
        \end{array}
      \right),
      \label{eq:complexlambdaJ}
    \end{equation}
where the \emph{dual structure} $J^*$ is defined as $(J^*\xi)X = \xi(JX)$ for $\xi\in T_p^*M$ and $X\in T_pM$, is weak for $\lambda = 1$ and strong for $\lambda = -1$.
\end{example}

\begin{example}
\label{ex:complexomega}
If we consider an almost symplectic manifold $(M,\omega)$, the following endomorphism is a strong generalized almost complex structure:
    \begin{equation}
      \J_{\omega} =
      \left(
        \begin{array}{cc}
          0        & -\sharp_{\omega}  \\
          \flat_{\omega}  & 0
        \end{array}
      \right).
      \label{eq:complexomega}
    \end{equation}
In the last expression, the \emph{musical isomorphisms} associated to $\omega$, $\flat_\omega\colon TM\to T^*M$ and $\sharp_\omega\colon T^*M\to TM$, are obtained in such a manner that $(\flat_\omega X)Y = \omega(X, Y)$ for every $X,Y\in T_pM$, and $\sharp_\omega = \flat_\omega^{-1}$.
\end{example}

\begin{example}
\label{ex:complexg}
Now, we take a (pseudo-)Riemannian manifold $(M,g)$. Then, the following generalized almost complex structure is weak:
    \begin{equation}
      \J_g =
      \left(
        \begin{array}{cc}
          0        & -\sharp_g  \\
          \flat_g  & 0
        \end{array}
      \right).
      \label{eq:complexg}
    \end{equation}
The musical isomorphisms associated to $g$ are defined analogously to the ones in Example \ref{ex:complexomega}.
\end{example}

It must be noted that, even though the structures $\J_{1,J}$ and $\J_g$ are considered weak generalized almost complex structures, they both are compatible with $\G_0$ in a different way than strong structures:
    \begin{equation*}
	  \G_0(\J_{1,J}(X + \xi), Y + \eta) = \G_0(X + \xi, \J_{1,J}(Y + \eta)),
	\end{equation*}
    \begin{equation*}
	  \G_0(\J_g(X + \xi), Y + \eta) = \G_0(X + \xi, \J_g(Y + \eta)).
	\end{equation*}
However, we will use the ``strong'' adjective only for structures that fulfil the condition given in Eq. \eqref{eq:strongcondition}.

For the sake of introducing the concept of integrability of a generalized almost complex structure, we must define a bracket product in $\TM$. The most used bracket in generalized geometry is the \emph{Dorfman bracket}, because it makes the structure $\J_{-1,J}$ from Eq. \eqref{eq:complexlambdaJ} integrable if and only if $J$ is integrable in the usual way; and the structure $\J_\omega$ from Eq. \eqref{eq:complexomega} integrable if and only if $\omega$ is symplectic. This bracket is defined for any $X, Y\in \xm$ and $\xi, \eta\in \dm$ as
    \begin{equation}
	  \llbracket X + \xi, Y + \eta\rrbracket = [X, Y] + \Lie_X \eta - i_Y d\xi.
	  \label{eq:dorfmanbracket}
	\end{equation}
In the previous expression, $[\cdot,\cdot]$ denotes the Lie bracket of vector fields; $\Lie_X$ is the Lie derivative with respect to $X$; and $i_Y$ is the contraction with $Y$. It is immediate to see that, unlike the Lie bracket, $\llbracket \cdot, \cdot\rrbracket$ is not skew-symmetric.

\begin{definition}
\label{def:integrability}
A weak (resp. strong) generalized almost complex structure $\J$ is said to be \emph{integrable} when the distribution $\Ll^{1,0}_\J = \{u - i\J u\colon u\in \TM\}\subset \TM\otimes \C$ is involutive with respect to the Dorfman bracket (in other words, if $\llbracket u - i\J u, v - i\J v\rrbracket\in \Gamma(\Ll^{1,0}_\J)$ for every $u,v\in \GTM$). Then, $\J$ is called a \emph{weak} (resp. \emph{strong}) \emph{generalized complex structure}.
\end{definition}

The integrability of a generalized almost complex structure can easily be characterized in terms of the \emph{generalized Nijenhuis map} associated to $\J$, which is defined for $u,v\in \GTM$ as
	\begin{equation}
	  \N_\J(u, v) = \llbracket\J u, \J v\rrbracket - \J(\llbracket\J u, v\rrbracket + \llbracket u, \J v\rrbracket) - \llbracket u, v\rrbracket.
	  \label{eq:nijenhuis}
	\end{equation}

\begin{proposition}
\label{prop:nijenhuis}
A weak generalized almost complex structure $\J$ is integrable if and only if $\N_\J\equiv 0$.
\end{proposition}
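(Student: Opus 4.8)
The plan is to characterize the integrability of $\J$ — i.e.\ the involutivity of $\Ll^{1,0}_\J$ under the Dorfman bracket — directly in terms of the vanishing of $\N_\J$. First I would note the algebraic fact that, since $\J^2 = -\mathcal Id$, the $+i$-eigenbundle of $\J$ (extended $\C$-linearly to $\TM\otimes\C$) is exactly $\Ll^{1,0}_\J = \{u - i\J u : u\in\TM\}$, and that $\TM\otimes\C = \Ll^{1,0}_\J \oplus \overline{\Ll^{1,0}_\J}$. A section $w$ of $\TM\otimes\C$ lies in $\Ll^{1,0}_\J$ precisely when $\J w = i w$. So integrability means: for all $u,v\in\GTM$, the element $\llbracket u - i\J u, v - i\J v\rrbracket$ is fixed, up to the factor $i$, by $\J$.

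The computational heart is to expand $\llbracket u - i\J u,\, v - i\J v\rrbracket$ using $\C$-bilinearity of the Dorfman bracket (which holds over $\R$, hence extends). This gives
\begin{equation*}
\llbracket u - i\J u, v - i\J v\rrbracket = \bigl(\llbracket u,v\rrbracket - \llbracket \J u,\J v\rrbracket\bigr) - i\bigl(\llbracket \J u, v\rrbracket + \llbracket u,\J v\rrbracket\bigr).
\end{equation*}
Call the real part $P$ and the imaginary coefficient $Q$, so the bracket equals $P - iQ$. I would then compute $\J(P - iQ) - i(P - iQ) = \bigl(\J P - Q\bigr) - i\bigl(\J Q + P\bigr)$ and observe that this real-and-imaginary decomposition vanishes iff $\J P = Q$ and $\J Q = -P$; but applying $\J$ to the first relation and using $\J^2 = -\mathcal Id$ shows the second follows from the first, so the condition collapses to the single identity $\J P - Q = 0$, i.e.
\begin{equation*}
\J\bigl(\llbracket u,v\rrbracket - \llbracket \J u,\J v\rrbracket\bigr) - \llbracket \J u, v\rrbracket - \llbracket u,\J v\rrbracket = 0.
\end{equation*}
Rearranging, this is precisely $-\N_\J(u,v) = 0$ after using $\J^2\llbracket u,v\rrbracket = -\llbracket u,v\rrbracket$; more directly, multiply the displayed relation by $-\J$ and use $\J^2=-\mathcal Id$ to recover exactly the expression in Eq. \eqref{eq:nijenhuis}. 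Hence $\llbracket u - i\J u, v - i\J v\rrbracket\in\Gamma(\Ll^{1,0}_\J)$ for all $u,v$ iff $\N_\J(u,v)=0$ for all $u,v$.

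The main point to be careful about — and the only place where weakness versus strongness might a priori matter — is that for this equivalence one only needs $\J^2 = -\mathcal Id$ and $\C$-bilinearity of the bracket; no compatibility with $\G_0$ is used. One subtlety worth flagging is that $\N_\J$ here is the \emph{map} defined on sections by Eq. \eqref{eq:nijenhuis}; the statement "$\N_\J \equiv 0$" is to be read as "$\N_\J(u,v) = 0$ for all $u,v\in\GTM$", and for the present proposition we do not need $\N_\J$ to be tensorial — that refinement is exactly what Proposition \ref{prop:conditiontensornijenhuis} addresses in the strong case. I expect the only mild obstacle to be bookkeeping: keeping track of signs when passing between the $+i$- and $-i$-eigenbundle conventions, and verifying that it suffices to test involutivity on the spanning sections $u - i\J u$ with $u\in\GTM$ rather than on arbitrary complex sections of $\Ll^{1,0}_\J$ — which follows because such sections span $\Ll^{1,0}_\J$ over $\fm\otimes\C$ and the Dorfman bracket, though not tensorial, has a derivation-type behaviour in each slot that keeps the output in the distribution once it lies there on generators.
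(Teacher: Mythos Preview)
Your proof is correct and follows essentially the same route as the paper: both expand $\llbracket u - i\J u,\, v - i\J v\rrbracket$ into real and imaginary parts and reduce membership in $\Ll^{1,0}_\J$ to the single relation $\J\bigl(\llbracket u,v\rrbracket - \llbracket \J u,\J v\rrbracket\bigr) = \llbracket \J u,v\rrbracket + \llbracket u,\J v\rrbracket$, which is $\N_\J(u,v)=0$; the paper phrases membership as ``has the form $w - i\J w$'' while you use the equivalent eigenvalue condition $\J w = iw$. Your closing concern about testing on ``spanning'' sections is unnecessary, since every section $s = u + iv$ of $\Ll^{1,0}_\J$ automatically satisfies $v = -\J u$ (from $\J s = is$) and hence already has the form $u - i\J u$ with $u$ real.
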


\begin{proof}
The Dorfman bracket can be extended to $\TM\otimes \C$ in such a way that $\llbracket iu, v\rrbracket = \llbracket u, iv\rrbracket = i\llbracket u, v\rrbracket$, so it is a $\C$-bilinear map. If $\llbracket u - i\J u, v - i\J v\rrbracket$ is computed for any $u,v\in \GTM$, we have
	\begin{equation*}
	  \llbracket u - i\J u, v - i\J v\rrbracket = \llbracket u, v\rrbracket - \llbracket \J u, \J v\rrbracket - i(\llbracket\J u, v\rrbracket + \llbracket u, \J v\rrbracket).
	\end{equation*}
Then, $\J$ will be integrable iff $\llbracket u - i\J u, v - i\J v\rrbracket$ can be written as $w - i\J w$ for some $w\in \GTM$. From the last expression, it is inferred that $w = \llbracket u, v\rrbracket - \llbracket \J u, \J v\rrbracket$ and $\J w = \llbracket\J u, v\rrbracket + \llbracket u, \J v\rrbracket$. Therefore, $\J$ will be integrable iff $\llbracket\J u, v\rrbracket + \llbracket u, \J v\rrbracket = \J(\llbracket u, v\rrbracket - \llbracket \J u, \J v\rrbracket)$ or, equivalently,
	\begin{equation*}
	  \llbracket\J u, \J v\rrbracket - \llbracket u, v\rrbracket - \J(\llbracket\J u, v\rrbracket + \llbracket u, \J v\rrbracket) = \N_\J(u, v) = 0,
	\end{equation*}
as we wanted to prove.
\end{proof}

It is worth remarking that if $\J$ is a strong generalized almost complex structure then $\Ll^{1,0}_\J$ is \emph{isotropic} with respect to the canonical generalized metric $\G_0$ (that is, $\Ll^{1,0}_\J\subset (\Ll^{1,0}_\J)^\perp$): for any $u,v\in \GTM$, if $\J$ is strong then
	\begin{equation*}
	  \G_0(u - i\J u, v - i\J v) = \G_0(u, v) - \G_0(\J u, \J v) - i(\G_0(\J u, v) + \G_0(u, \J v)) = 0.
	\end{equation*}
In fact, the following two conditions are equivalent for strong generalized almost complex structures \cite[Prop. 3.27]{GUALTIERI2004}:
\begin{itemize}

\item $\N_\J \equiv 0$.

\item $\Ll^{1,0}_\J$ is a maximal isotropic subbundle of $\TM\otimes \C$ which is involutive with respect to the Dorfman bracket.

\end{itemize}
As we have just seen in Proposition \ref{prop:nijenhuis}, the equivalence between $\N_\J\equiv 0$ and the fact that $\Ll^{1,0}_\J$ is involutive under the Dorfman bracket remains true in the case of weak generalized almost complex structures. In other works, such as \cite{IDAMANEA2017} by C. Ida and A. Manea, and \cite{HUMORARUSVOBODA2019} by S. Hu, R. Moraru and D. Svoboda, non-isotropic structures are also studied regarding its integrability.

It is of utmost importance to comment on some properties of the generalized Nijenhuis map and the differences with respect to the usual Nijenhuis tensor of two vector fields. In other works where strong generalized structures are studied, such as \cite{CORTESDAVID2021} by V. Cortés and L. David, it has been checked that the generalized Nijenhuis map behaves as a tensor on a manifold, as it is shown in the following statement:

\begin{proposition}[{\cite[Lemma 9]{CORTESDAVID2021}}]
\label{prop:conditiontensornijenhuis}
The Nijenhuis generalized map $\N_\J(u, v)$ associated to a strong generalized almost complex structure is $\fm$-linear in both $u$ and $v$.
\end{proposition}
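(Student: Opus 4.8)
The plan is to verify $\F(M)$-linearity directly from the definition in Eq.~\eqref{eq:nijenhuis}, using the Leibniz-type properties of the Dorfman bracket and the strong compatibility condition \eqref{eq:strongcondition} to cancel the anomalous non-tensorial terms. First I would record how the Dorfman bracket fails to be tensorial: for $f\in\fm$ and $u=X+\xi$, $v=Y+\eta\in\GTM$ one has $\llbracket u, fv\rrbracket = f\llbracket u,v\rrbracket + (Xf)v$ and $\llbracket fu, v\rrbracket = f\llbracket u,v\rrbracket - (Yf)u + 2\G_0(u,v)\,df$, where the $2\G_0(u,v)\,df$ term is exactly the obstruction to skew-symmetry of the Dorfman bracket. (These follow from \eqref{eq:dorfmanbracket} together with $\Lie_{fX}\eta = f\Lie_X\eta + df\wedge i_X\eta$ and $i_{fY}d\xi = f\,i_Y d\xi$, plus $i_Y d(f\xi) = f\,i_Y d\xi + (Yf)\xi - \eta(\text{correction})$; I would assemble these into the two displayed identities carefully.) Because $\N_\J$ is manifestly symmetric under swapping the roles so that all four arguments of the brackets run over $\{u,\J u, v,\J v\}$, it suffices to check linearity in the second slot $v$; linearity in $u$ then follows by an identical computation.

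Next I would substitute $fv$ for $v$ in Eq.~\eqref{eq:nijenhuis} and collect the extra terms produced by each of the four Dorfman brackets. The bracket $\llbracket \J u, \J(fv)\rrbracket = \llbracket \J u, f\J v\rrbracket$ contributes $((\text{pr}_{TM}\J u)f)\,\J v$; the term $-\J\llbracket \J u, fv\rrbracket$ contributes $-((\text{pr}_{TM}\J u)f)\,\J v$; the term $-\J\llbracket u, f\J v\rrbracket$ contributes $-((\text{pr}_{TM}u)f)\,\J^2 v = ((\text{pr}_{TM}u)f)\,v$, using $\J^2=-\mathcal{Id}$; and $-\llbracket u, fv\rrbracket$ contributes $-((\text{pr}_{TM}u)f)\,v$. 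So far the anomalous terms cancel in pairs and no $\G_0$ term has appeared, because $f$ multiplied only terms sitting in the \emph{second} bracket slot, where the Dorfman bracket is a genuine derivation. This shows $\N_\J(u,fv)=f\,\N_\J(u,v)$ \emph{without} even invoking strongness — but this is where I expect the subtlety, so I would double-check that no $df$-anomaly slips in from the outer $\J$ acting on a bracket whose value involves $df$.

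The role of the strong condition \eqref{eq:strongcondition} enters when checking linearity in the \emph{first} argument, since $\llbracket fu,\cdot\rrbracket$ and $\llbracket f\J u,\cdot\rrbracket$ produce $\G_0$-anomalies: expanding $\N_\J(fu,v)$ one picks up $2\G_0(\J u,\J v)\,df$ from $\llbracket \J(fu),\J v\rrbracket$, $-2\J\bigl(\G_0(\J u, v)\,df\bigr)$ from $-\J\llbracket \J(fu), v\rrbracket$, $-2\J\bigl(\G_0(u,\J v)\,df\bigr)$ from $-\J\llbracket fu,\J v\rrbracket$, and $-2\G_0(u,v)\,df$ from $-\llbracket fu, v\rrbracket$ — together with the $(Yf)$-type terms which cancel as before. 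Using \eqref{eq:strongcondition} in the form $\G_0(\J u, v) = -\G_0(u,\J v)$, the middle two terms combine to $-2\J\bigl((\G_0(\J u,v)+\G_0(u,\J v))\,df\bigr) = 0$. Using it again as $\G_0(\J u,\J v)=\G_0(u,v)$ (apply \eqref{eq:strongcondition} twice), the first and last terms give $2\G_0(u,v)\,df - 2\G_0(u,v)\,df = 0$. Hence all anomalies vanish and $\N_\J(fu,v)=f\,\N_\J(u,v)$. The main obstacle is purely bookkeeping: tracking the precise form of every anomalous term (the $(Xf)$-, $(Yf)$- and $df$-contributions) through the composition with $\J$ and ensuring the sign conventions in the non-skew Dorfman identities are applied consistently; once the two correction identities for $\llbracket fu,v\rrbracket$ and $\llbracket u,fv\rrbracket$ are firmly in hand, the cancellations are forced by $\J^2=-\mathcal{Id}$ together with \eqref{eq:strongcondition}.
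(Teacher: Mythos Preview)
The paper does not actually supply a proof of this proposition; it is quoted directly from \cite[Lemma~9]{CORTESDAVID2021} without argument. Your direct verification is correct and is the standard approach: the two identities $\llbracket u, fv\rrbracket = f\llbracket u,v\rrbracket + (\mathrm{pr}_{TM}u)(f)\,v$ and $\llbracket fu, v\rrbracket = f\llbracket u,v\rrbracket - (\mathrm{pr}_{TM}v)(f)\,u + 2\G_0(u,v)\,df$ are exactly what one needs, and your tracking of the anomalous terms through the four brackets is accurate, with the derivative-type terms cancelling via $\J^2=-\mathcal{Id}$ and the $df$-anomalies cancelling via the strong condition \eqref{eq:strongcondition}. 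Your observation that second-slot linearity holds even for weak structures is also right, and it is consistent with the paper's counterexample immediately after the proposition, where the failure of tensoriality is exhibited only in the first argument.

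One cosmetic point: the opening claim that it ``suffices to check linearity in the second slot $v$'' by a symmetry argument is not justified, since the Dorfman bracket is not skew-symmetric and $\N_\J$ genuinely treats its two arguments differently (as your own computation confirms). But you abandon that claim immediately and carry out both computations explicitly, so the proof stands.
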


Then, the generalized Nijenhuis map of a strong generalized almost complex structure can be called its \emph{generalized Nijenhuis tensor}. However, this is not the case for weak generalized almost complex structures: $\N_\J$ is not always $\fm$-linear. To see this, take any almost Hermitian manifold $(M, J, g)$. This geometric structure induces the following generalized almost complex structure:
	\begin{equation*}
	  \J = \left(
        \begin{array}{cc}
          J  & \sharp_g  \\
          0  & J^* 
        \end{array}
      \right).
	\end{equation*}
It is easy to check that $\J^2 = -\mathcal Id$: knowing that $J\sharp_g = -\sharp_g J^*$ (\cite[Prop. 2.8]{ETAYOGOMEZNICOLASSANTAMARIA2022}),
\begin{equation*}
	  \J^2 = \left(
        \begin{array}{cc}
          J  & \sharp_g  \\
          0  & J^* 
        \end{array}
      \right)\left(
        \begin{array}{cc}
          J  & \sharp_g  \\
          0  & J^* 
        \end{array}
      \right) = \left(
        \begin{array}{cc}
          J^2  & J\sharp_g + \sharp_g J^*  \\
          0    & (J^*)^2 
        \end{array}
      \right) = \left(
        \begin{array}{cc}
          -Id  & 0  \\
          0    & -Id 
        \end{array}
      \right).
	\end{equation*}
Also, this generalized structure is weak (see, for example, \cite[Prop. 4.9]{ETAYOGOMEZNICOLASSANTAMARIA2024}): a direct calculation shows that
	\begin{equation*}
	  \begin{split}
	    \G_0(\J(X + \xi), \J(Y + \eta)) &= \G_0(JX + \sharp_g\xi + J^*\xi, JY + \sharp_g\eta + J^*\eta)  \\
	   																 &= \frac{1}{2}((J^*\xi)(JY + \sharp_g\eta) + (J^*\eta)(JX + \sharp_g\xi))  \\
	  															  	 &= \frac{1}{2}(g(\sharp_g \xi, J\sharp_g \eta) + g(\sharp_g \eta, J\sharp_g \xi) + \xi(J^2 Y) + \eta(J^2 X))  \\
	  															  	 &= - \mathcal G_0(X + \xi, Y + \eta),
	  \end{split}
	\end{equation*}
thus proving that it is not strong. In order to see that its generalized Nijenhuis map does not behave as a tensor, we may calculate $\N_\J(\xi, \eta)$ for any $\xi,\eta\in \dm$:	
	\begin{equation*}
	  \begin{split}
	    \N_\J(\xi, \eta) &= \llbracket \J\xi, \J\eta\rrbracket - \J(\llbracket \J\xi, \eta\rrbracket + \llbracket \xi, \J\eta\rrbracket) - \llbracket \xi, \eta\rrbracket  \\
	    									&= \llbracket \sharp_g\xi + J^*\xi, \sharp_g\eta + J^*\eta\rrbracket - \J(\llbracket \sharp_g\xi + J^*\xi, \eta\rrbracket + \llbracket \xi, \sharp_g\eta + J^*\eta\rrbracket)  \\
											&= [\sharp_g\xi, \sharp_g\eta] + \Lie_{\sharp_g\xi}(J^*\eta) - i_{\sharp_g\eta}d(J^*\xi) - \J(\Lie_{\sharp_g\xi}\eta - i_{\sharp_g\eta}d\xi)  \\
	    									&= [\sharp_g\xi, \sharp_g\eta] - \sharp_g\Lie_{\sharp_g\xi}\eta + \sharp_g i_{\sharp_g\eta}d\xi + \Lie_{\sharp_g\xi}(J^*\eta) - i_{\sharp_g\eta}d(J^*\xi) - J^*\Lie_{\sharp_g\xi}\eta + J^*i_{\sharp_g\eta}d\xi.
	  \end{split}
	\end{equation*}
If we now multiply $\xi$ by a function $f\in \fm$, it is clear that the map $\N_\J$ is not $\fm$-linear:
	\begin{equation*}
	  \begin{split}
	    \N_\J(f\xi, \eta) &= [f\sharp_g\xi, \sharp_g\eta] - \sharp_g\Lie_{f\sharp_g\xi}\eta + \sharp_g i_{\sharp_g\eta}d(f\xi) + \Lie_{f\sharp_g\xi}(J^*\eta) - i_{\sharp_g\eta}d(fJ^*\xi) - J^*\Lie_{f\sharp_g\xi}\eta + J^*i_{\sharp_g\eta}d(f\xi)  \\
	    				  &= f\N_\J(\xi, \eta) - (\sharp_g\eta)(f)\sharp_g\xi - \sharp_g df \eta(\sharp_g\xi) + (\sharp_g\eta)(f)\sharp_g\xi - \sharp_g df \xi(\sharp_g\eta)  \\
	    				  &\phantom{=} + df(J^*\eta)(\sharp_g\xi) - (\sharp_g\eta)(f)J^*\xi + df(J^*\xi)(\sharp_g\eta) - J^*df \eta(\sharp_g\xi) + (\sharp_g\eta)(f)J^*\xi - J^*df \xi(\sharp_g\eta)  \\
	    									 &= f\N_\J(\xi, \eta) - 2g(\sharp_g\xi, \sharp_g\eta)\sharp_g df - 2g(\sharp_g\xi, \sharp_g\eta)J^*df.
	  \end{split}
	\end{equation*}

The next result focuses on conditions that a structure must fulfil in order to be integrable.

\begin{theorem}
\label{thm:integrabilitycoord}
Let $\J\colon\TM\to \TM$ be a weak generalized almost complex structure that is written in matrix form as in Eq. \eqref{eq:matrixendomorphism}. Let us take local coordinates $(U, (x^1, \dots, x^n))$ such that
	\begin{equation*}
	  A\partial_i = A_i^j\partial_j, \enspace\enspace B dx^i = B^{ij}\partial_j, \enspace\enspace C\partial_i = C_{ij} dx^j, \enspace\enspace D dx^i = D^i_j dx^j.
	\end{equation*}
If the generalized structure $\J$ is integrable, then the following conditions are met for $i,j,l = 1,\dots,n$:
	\begin{align}
	  A^k_i\frac{\partial A^l_j}{\partial x^k} - A^k_j\frac{\partial A^l_i}{\partial x^k} + A^l_k\left(\frac{\partial A^k_i}{\partial x^j} - \frac{\partial A^k_j}{\partial x^i}\right) - B^{kl}\left(\frac{\partial C_{jk}}{\partial x^i} - \frac{\partial C_{ik}}{\partial x^j} + \frac{\partial C_{ij}}{\partial x^k}\right)
	  &= 0,  \label{eq:integrability1} \\
	  A^k_i\frac{\partial C_{jl}}{\partial x^k} + C_{jk}\frac{\partial A^k_i}{\partial x^l} + A^k_j\left(\frac{\partial C_{ik}}{\partial x^l} - \frac{\partial C_{il}}{\partial x^k}\right) + C_{kl}\left(\frac{\partial A^k_i}{\partial x^j} - \frac{\partial A^k_j}{\partial x^i}\right) - D^k_l\left(\frac{\partial C_{jk}}{\partial x^i} - \frac{\partial C_{ik}}{\partial x^j} + \frac{\partial C_{ij}}{\partial x^k}\right)
	  &= 0,  \label{eq:integrability2}  \\
	  A^k_i\frac{\partial B^{jl}}{\partial x^k} - B^{jk}\frac{\partial A^l_i}{\partial x^k} - A^l_k\frac{\partial B^{jk}}{\partial x^i} - B^{kl}\left(\frac{\partial A^j_i}{\partial x^k} + \frac{\partial D^j_k}{\partial x^i}\right)
	  &= 0,  \label{eq:integrability3}  \\
		A^k_i\frac{\partial D^j_l}{\partial x^k} + D^j_k\frac{\partial A^k_i}{\partial x^l} + B^{jk}\left(\frac{\partial C_{ik}}{\partial x^l} - \frac{\partial C_{il}}{\partial x^k}\right) - C_{kl}\frac{\partial B^{jk}}{\partial x^i} - D^k_l\left(\frac{\partial A^j_i}{\partial x^k} + \frac{\partial D^j_k}{\partial x^i}\right)
	  &= 0,  \label{eq:integrability4}  \\
	  B^{ik}\frac{\partial A^l_j}{\partial x^k} - A^k_j\frac{\partial B^{il}}{\partial x^k} + A^l_k\frac{\partial B^{ik}}{\partial x^j} + B^{kl}\left(\frac{\partial D^i_k}{\partial x^j} - \frac{\partial D^i_j}{\partial x^k}\right)
	  &= 0,  \label{eq:integrability5}  \\
	  B^{ik}\frac{\partial C_{jl}}{\partial x^k} + C_{jk}\frac{\partial B^{ik}}{\partial x^l} + A^k_j\left(\frac{\partial D^i_k}{\partial x^l} - \frac{\partial D^i_l}{\partial x^k}\right) + C_{kl}\frac{\partial B^{ik}}{\partial x^j} + D^k_l\left(\frac{\partial D^i_k}{\partial x^j} - \frac{\partial D^i_j}{\partial x^k}\right)
	  &= 0,  \label{eq:integrability6} \\
	  B^{ik}\frac{\partial B^{jl}}{\partial x^k} - B^{jk}\frac{\partial B^{il}}{\partial x^k} - B^{kl}\frac{\partial B^{ij}}{\partial x^k}
	  &= 0,  \label{eq:integrability7} \\
	  B^{ik}\frac{\partial D^j_l}{\partial x^k} + D^j_k\frac{\partial B^{ik}}{\partial x^l} + B^{jk}\left(\frac{\partial D^i_k}{\partial x^l} - \frac{\partial D^i_l}{\partial x^k}\right) - D^k_l\frac{\partial B^{ij}}{\partial x^k}
	  &= 0.  \label{eq:integrability8}
	\end{align}
\end{theorem}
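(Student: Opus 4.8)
The proof rests on Proposition~\ref{prop:nijenhuis}: the structure $\J$ is integrable exactly when its generalized Nijenhuis map $\N_\J$ vanishes identically, so in particular $\N_\J$ must vanish when evaluated on the sections of the coordinate frame $\{\partial_1,\dots,\partial_n,dx^1,\dots,dx^n\}$ of $\TM$ over $U$. This is all we exploit, which is why the conclusion is only a \emph{necessary} condition: for a weak structure the converse would require $\N_\J$ to be a tensor, and the example after Proposition~\ref{prop:conditiontensornijenhuis} shows it need not be. Because the Dorfman bracket \eqref{eq:dorfmanbracket} is not skew-symmetric, $\N_\J$ need not be skew-symmetric either, so I would compute separately the four families $\N_\J(\partial_i,\partial_j)$, $\N_\J(\partial_i,dx^j)$, $\N_\J(dx^i,\partial_j)$, $\N_\J(dx^i,dx^j)$. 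Each is a local section of $\TM$: extracting the coefficient of $\partial_l$ gives one scalar identity and the coefficient of $dx^l$ another, producing the eight equations, in the order that the $\partial_l$- and $dx^l$-parts of $\N_\J(\partial_i,\partial_j)$ are \eqref{eq:integrability1} and \eqref{eq:integrability2}; those of $\N_\J(\partial_i,dx^j)$ are \eqref{eq:integrability3} and \eqref{eq:integrability4}; those of $\N_\J(dx^i,\partial_j)$ are \eqref{eq:integrability5} and \eqref{eq:integrability6}; and those of $\N_\J(dx^i,dx^j)$ are \eqref{eq:integrability7} and \eqref{eq:integrability8}.

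For the computation itself I would first read off from \eqref{eq:matrixendomorphism} that $\J\partial_i = A^k_i\partial_k + C_{ik}\,dx^k$ and $\J\,dx^i = B^{ik}\partial_k + D^i_k\,dx^k$. Substituting these into \eqref{eq:nijenhuis} reduces the whole calculation to Dorfman brackets of monomials $f\partial_a$ and $f\,dx^a$, with $f$ a component function of $\J$, followed by a single further application of $\J$ (a pointwise-linear operation that couples the blocks $A,B$ to $C,D$). The only local formulas needed are, writing $\partial_a = \partial/\partial x^a$,
\[
[f\partial_a,g\partial_b] = f(\partial_a g)\partial_b - g(\partial_b f)\partial_a,\qquad
\Lie_{f\partial_a}(g\,dx^b) = f(\partial_a g)\,dx^b + g\,\delta^b_a\,df,
\]
\[
i_{g\partial_b}\,d(f\,dx^a) = g(\partial_b f)\,dx^a - g\,\delta^a_b(\partial_c f)\,dx^c,
\]
together with $\llbracket\partial_i,\partial_j\rrbracket = \llbracket\partial_i,dx^j\rrbracket = \llbracket dx^i,dx^j\rrbracket = 0$ (for the latter two the covector parts involve only derivatives of the constant Kronecker coefficients, hence vanish). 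Assembling the vector and covector parts and relabelling the summation indices then yields the stated identities; for instance, in $\N_\J(\partial_i,\partial_j)$ the term $\llbracket\partial_i,\partial_j\rrbracket$ drops out, and the $\partial_l$-component of $\J(\llbracket\J\partial_i,\partial_j\rrbracket+\llbracket\partial_i,\J\partial_j\rrbracket)$ works out to $A^l_k(\partial_i A^k_j - \partial_j A^k_i) + B^{kl}(\partial_i C_{jk} - \partial_j C_{ik} + \partial_k C_{ij})$, so that subtracting it from the $\partial_l$-component of $\llbracket\J\partial_i,\J\partial_j\rrbracket = (A^k_i\partial_k A^l_j - A^k_j\partial_k A^l_i)\partial_l + \cdots$ reproduces \eqref{eq:integrability1}, and the $dx^l$-component analogously gives \eqref{eq:integrability2}.

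I do not expect a conceptual obstacle; the difficulty is entirely organizational. Each of the eight identities is a sum of several first-order differential monomials in the components of $A,B,C,D$, and one must be scrupulous about index placement and about signs — notably the minus sign in $i_{g\partial_b}\,d(f\,dx^a)$ and the way $\J$ mixes the two ``slots'' when applied to $\llbracket\J u,v\rrbracket+\llbracket u,\J v\rrbracket$. Useful internal consistency checks are that every term produced carries exactly one derivative (no undifferentiated or second-order terms can survive, since none are present in any intermediate expression), and that specializing $\J$ to the model structures $\J_{-1,J}$ and $\J_\omega$ of Examples~\ref{ex:complexlambdaJ} and~\ref{ex:complexomega} must make \eqref{eq:integrability1}--\eqref{eq:integrability8} reduce to the vanishing of the Nijenhuis tensor of $J$ and to $d\omega = 0$, respectively, in agreement with the facts recalled before Theorem~\ref{thm:s6structures}.
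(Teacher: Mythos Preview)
Your proposal is correct and follows essentially the same approach as the paper: the paper also evaluates $\N_\J$ on the four pairs $(\partial_i,\partial_j)$, $(\partial_i,dx^j)$, $(dx^i,\partial_j)$, $(dx^i,dx^j)$ and reads off the $\partial_l$- and $dx^l$-components, carrying out the computation explicitly only for $\N_\J(\partial_i,\partial_j)$ (obtaining \eqref{eq:integrability1} and \eqref{eq:integrability2}) and asserting that the remaining six equations arise analogously. Your local formulas for $[f\partial_a,g\partial_b]$, $\Lie_{f\partial_a}(g\,dx^b)$ and $i_{g\partial_b}d(f\,dx^a)$ are exactly the ingredients the paper's computation tacitly uses, and your sketch of the $\partial_l$-part of $\N_\J(\partial_i,\partial_j)$ matches the paper's derivation line by line.
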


\begin{proof}
To obtain Eqs. (\ref{eq:integrability1}-\ref{eq:integrability8}), one must calculate $\N_\J(X, Y)$, $\N_\J(X, \eta)$, $\N_\J(\xi, Y)$, $\N_\J(\xi, \eta)$ for any vector fields and differential forms $X, Y\in \xm$ and $\xi, \eta\in \dm$. We compute $\N_\J(X, Y)$ explicitly:
	\begin{equation*}
	  \begin{split}
	    \N_\J(X, Y) &= \llbracket\J X, \J Y\rrbracket - \J(\llbracket\J X, Y\rrbracket + \llbracket X, \J Y\rrbracket) - \llbracket X, Y\rrbracket  \\
	    			&= \llbracket AX + CX, AY + CY\rrbracket - \J(\llbracket AX + CX, Y\rrbracket + \llbracket X, AY + CY\rrbracket) - \llbracket X, Y\rrbracket  \\
	    			&= [AX, AY] + \Lie_{AX}(CY) - i_{AY}d(CX) - \J([AX, Y] - i_Yd(CX) + [X, AY] + \Lie_X(CY)) - [X, Y]  \\
	    			&= [AX, AY] - A([AX, Y] + [X, AY]) + B(i_Yd(CX) - \Lie_X(CY)) - [X, Y]  \\
	    			&\phantom{=} + \Lie_{AX}(CY) - i_{AY}d(CX) - C([AX, Y] + [X, AY]) + D(i_Yd(CX) - \Lie_X(CY)).
	  \end{split}
	\end{equation*}	
The vector fields $X, Y$ can now be replaced by the coordinate fields $X = \partial_i$, $Y = \partial_j$. Then, we expand the vector field and differential form components of $\N_\J(\partial_i, \partial_j)$ separately, obtaining two different expressions:
	\begin{equation*}
	  \begin{split}
	    \left.\N_\J(\partial_i, \partial_j)\right|_{\mathfrak X(U)} &= [A\partial_i, A\partial_j] - A([A\partial_i, \partial_j] + [\partial_i, A\partial_j]) + B(i_{\partial_j} d(C\partial_i) - \Lie_{\partial_i}(C\partial_j)) - [\partial_i, \partial_j]  \\
	    															&= [A_i^k\partial_k, A_j^l\partial_l] - A([A_i^k\partial_k, \partial_j] + [\partial_i, A_j^k\partial_k]) + B(i_{\partial_j} d(C_{ik} dx^k) - \Lie_{\partial_i}(C_{jk} dx^k))  \\
	    															&= \left(A_i^k\frac{\partial A_j^l}{\partial x^k} - A_j^k\frac{\partial A_i^l}{\partial x^k}\right)\partial_l - A\left(\frac{\partial A_j^k}{\partial x^i} - \frac{\partial A_i^k}{\partial x^j}\right)\partial_k + B\left(\frac{\partial C_{ik}}{\partial x^j} - \frac{\partial C_{ij}}{\partial x^k} - \frac{\partial C_{jk}}{\partial x^i}\right)dx^k  \\
	    															&= \left[A^k_i\frac{\partial A^l_j}{\partial x^k} - A^k_j\frac{\partial A^l_i}{\partial x^k} + A^l_k\left(\frac{\partial A^k_i}{\partial x^j} - \frac{\partial A^k_j}{\partial x^i}\right) - B^{kl}\left(\frac{\partial C_{jk}}{\partial x^i} - \frac{\partial C_{ik}}{\partial x^j} + \frac{\partial C_{ij}}{\partial x^k}\right)\right]\partial_l,
	  \end{split}
	\end{equation*}
	\begin{equation*}
	  \begin{split}
	    \left.\N_\J(\partial_i, \partial_j)\right|_{\Omega^1(U)} &= \Lie_{A\partial_i}(C\partial_j) - i_{A\partial_j} d(C\partial_i) - C([A\partial_i, \partial_j] + [\partial_i, A\partial_j]) + D(i_{\partial_j} d(C\partial_i) - \Lie_{\partial_i}(C\partial_j))  \\
	    														 &= \Lie_{A_i^k\partial_k}(C_{jl}dx^l) - i_{A_j^k\partial_k} d(C_{il}dx^l) - C([A_i^k\partial_k, \partial_j] + [\partial_i, A_j^k\partial_k])  \\
	    														 &\phantom{=} + D(i_{\partial_j} d(C_{ik} dx^k) - \Lie_{\partial_i}(C_{jk} dx^k))  \\
	    														 &= \left(A^k_i\frac{\partial C_{jl}}{\partial x^k} + C_{jk}\frac{\partial A^k_i}{\partial x^l}\right)dx^l + A^k_j\left(\frac{\partial C_{ik}}{\partial x^l} - \frac{\partial C_{il}}{\partial x^k}\right)dx^l - C\left(\frac{\partial A_j^k}{\partial x^i} - \frac{\partial A_i^k}{\partial x^j}\right)\partial_k  \\
	    														 &\phantom{=} + D\left(\frac{\partial C_{ik}}{\partial x^j} - \frac{\partial C_{ij}}{\partial x^k} - \frac{\partial C_{jk}}{\partial x^i}\right)dx^k  \\
	    														 &= \left[A^k_i\frac{\partial C_{jl}}{\partial x^k} + C_{jk}\frac{\partial A^k_i}{\partial x^l} + A^k_j\left(\frac{\partial C_{ik}}{\partial x^l} - \frac{\partial C_{il}}{\partial x^k}\right) + C_{kl}\left(\frac{\partial A^k_i}{\partial x^j} - \frac{\partial A^k_j}{\partial x^i}\right)\right.  \\
	    														 &\phantom{=} \left.- D^k_l\left(\frac{\partial C_{ij}}{\partial x^k} - \frac{\partial C_{ik}}{\partial x^j} + \frac{\partial C_{jk}}{\partial x^i}\right)\right]dx^l.
	  \end{split}
	\end{equation*}
Therefore, it is proved that $\N_\J(\partial_i, \partial_j) = 0$ if and only if Eqs. (\ref{eq:integrability1}, \ref{eq:integrability2}) hold true. 

Similar calculations can be done for $\N_\J(\partial_i, dx^j)$, $\N_\J(dx^i, \partial_j)$ and $\N_\J(dx^i, dx^j)$, splitting each of them into its vector field and differential form components and obtaining the conditions in Eqs. (\ref{eq:integrability3}-\ref{eq:integrability8}).
\end{proof}

Because of the $\fm$-linearity of the generalized Nijenhuis tensor of a strong generalized almost complex structure, it is immediate to check that the converse implication is also true when the structure is strong. Then, we have the following proposition.

\begin{proposition}
\label{prop:strongintegrabilitycoord}
If the generalized almost complex structure $\J$ in Theorem \ref{thm:integrabilitycoord} is strong, then $\J$ is integrable if and only if Eqs. {\normalfont (\ref{eq:integrability1}-\ref{eq:integrability8})} hold true.
\end{proposition}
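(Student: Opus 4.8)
The forward implication is exactly the content of Theorem~\ref{thm:integrabilitycoord}, so the plan is to establish the converse: assuming that Eqs.~(\ref{eq:integrability1}--\ref{eq:integrability8}) hold in every coordinate chart, show that $\J$ is integrable. By Proposition~\ref{prop:nijenhuis}, this reduces to proving that $\N_\J \equiv 0$.

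First I would invoke Proposition~\ref{prop:conditiontensornijenhuis}: since $\J$ is strong, $\N_\J$ is $\fm$-bilinear, hence a tensor field, so the value $\N_\J(u,v)|_p$ depends only on $u_p, v_p \in \TpM$. Therefore, to verify $\N_\J \equiv 0$ on a coordinate domain $U$ it suffices to check that $\N_\J$ vanishes on all pairs taken from the local frame $\{\partial_1,\dots,\partial_n, dx^1,\dots,dx^n\}$ of $\TM|_U$; $\fm$-bilinearity then propagates the vanishing to arbitrary sections over $U$, and covering $M$ by such charts propagates it to all of $M$.

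Next I would observe that the frame pairs split into the four types $(\partial_i,\partial_j)$, $(\partial_i, dx^j)$, $(dx^i,\partial_j)$, $(dx^i, dx^j)$, and that decomposing each value $\N_\J(\cdot,\cdot)$ into its $\xm$- and $\dm$-parts produces eight families of scalar identities. The pair $(\partial_i,\partial_j)$ was already treated in the proof of Theorem~\ref{thm:integrabilitycoord}, where its vanishing was shown to be equivalent to Eqs.~(\ref{eq:integrability1}, \ref{eq:integrability2}); the same bookkeeping applied to the remaining three pairs yields Eqs.~(\ref{eq:integrability3}--\ref{eq:integrability8}). Thus Eqs.~(\ref{eq:integrability1}--\ref{eq:integrability8}) holding on $U$ is equivalent to $\N_\J|_U \equiv 0$, and hence to $\N_\J \equiv 0$ on $M$, i.e. to the integrability of $\J$.

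The only delicate point — and the single place where the hypothesis of strength is genuinely used — is the step that passes from the pointwise condition $\N_\J\equiv 0$ to the finitely many frame identities. For a merely weak structure $\N_\J$ need not be a tensor, as the example following Proposition~\ref{prop:conditiontensornijenhuis} shows, so vanishing on a frame carries no information about vanishing on general sections; Proposition~\ref{prop:conditiontensornijenhuis} is exactly what removes this obstruction, making the equivalence routine once the bracket computations of Theorem~\ref{thm:integrabilitycoord} are in place.
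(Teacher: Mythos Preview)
Your proposal is correct and follows precisely the approach the paper itself takes: the paper's justification is the one-line remark immediately preceding the proposition, namely that the $\fm$-linearity of $\N_\J$ for strong structures (Proposition~\ref{prop:conditiontensornijenhuis}) makes the converse of Theorem~\ref{thm:integrabilitycoord} immediate, since vanishing on the coordinate frame then forces vanishing on all sections. Your write-up simply unpacks this observation in more detail, and your closing paragraph correctly identifies why the strength hypothesis is essential.
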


The above result can be compared with some of the generalized structures that have been studied in other works.

\begin{example}[{\cite[Ex. 4.21]{GUALTIERI2004}}]
\label{ex:integrabilitylambdaJ}
It is well-know that, if $(M,J)$ is an almost complex manifold, the strong generalized almost complex structure $\J_{-1,J}$ from Eq. \eqref{eq:complexlambdaJ} is integrable if and only if $J$ is integrable. In this case, we have $A = J$, $B = 0$, $C = 0$ and $D = -J^*$. Therefore, the only non-trivial expressions from Theorem \ref{thm:integrabilitycoord} are Eqs. (\ref{eq:integrability1}, \ref{eq:integrability4}, \ref{eq:integrability6}). Using local coordinates, $J\partial_i = J_i^k\partial_k$ and $J^*dx^i = J^i_k dx^k$. The Nijenhuis tensor of $J$, in coordinates, is equal to
	\begin{equation*}
	  N_J(\partial_i, \partial_j) = \left[J^k_i\frac{\partial J^l_j}{\partial x^k} - J^k_j\frac{\partial J^l_i}{\partial x^k} + J^l_k\left(\frac{\partial J^k_i}{\partial x^j} - \frac{\partial J^k_j}{\partial x^i}\right)\right]\partial_l.
	\end{equation*}
Then, Eq. \eqref{eq:integrability1} holds true if and only if $N_J \equiv 0$; in other words, if and only if $J$ is integrable. In respect of Eqs. (\ref{eq:integrability4}, \ref{eq:integrability6}), one must also use the fact that $J^2 = -Id$; taking local coordinates, $J^i_k J^k_j = \delta^i_j$ and, consequently,
	\begin{equation*}
	  J^i_k\frac{\partial J^k_j}{\partial x^l} = - J^k_j\frac{\partial J^i_k}{\partial x^l}.
	\end{equation*}
Thus, it is easy to check that, as expected, these two equations are true if and only if $J$ is integrable.
\end{example}

\begin{example}[{\cite[Ex. 4.20]{GUALTIERI2004}}]
\label{ex:integrabilityomega}
The other typical example of a generalized complex structure is induced from an almost symplectic manifold $(M,\omega)$. The generalized almost complex structure $\J_\omega$ from Eq. \eqref{eq:complexomega} is integrable if and only if $\omega$ is symplectic (i.e., $d\omega = 0$). Using its matrix form we have $A = 0$, $B = -\sharp_\omega$, $C = \flat_\omega$ and $D = 0$. Then, Eqs. (\ref{eq:integrability1}, \ref{eq:integrability4}, \ref{eq:integrability6}, \ref{eq:integrability7}) must be studied because of their non-triviality. Taking local coordinates, we have that $\flat_\omega\partial_i = \omega_{ij}dx^j$ and $-\sharp_\omega dx^i = -\omega^{ij}\partial_j$, where $\omega^{ik}\omega_{kj} = \delta^i_j$ because $\sharp_\omega = \flat_\omega^{-1}$. In coordinates, the local representation of the exterior derivative of $\omega$ is given by
	\begin{equation*}
	  (d\omega)_{ijk} = \frac{\partial\omega_{jk}}{\partial x^i} - \frac{\partial\omega_{ik}}{\partial x^j} + \frac{\partial\omega_{ij}}{\partial x^k}.
	\end{equation*}
Therefore, Eq. \eqref{eq:integrability1} is true if and only if $d\omega = 0$. Regarding Eqs. (\ref{eq:integrability4}, \ref{eq:integrability6}), in a similar way to Example \ref{ex:integrabilitylambdaJ}, knowing that
	\begin{equation*}
	  \omega^{ik}\frac{\partial \omega_{kj}}{\partial x^l} = - \omega_{kj}\frac{\partial \omega^{ik}}{\partial x^l},
	\end{equation*}
we have that these two equations are true if and only if $\omega$ is symplectic. Finally, to check Eq. \eqref{eq:integrability7} we can modify the previous relation in order to obtain
	\begin{equation*}
	  \frac{\partial \omega^{ik}}{\partial x^l} = \omega^{ir}\omega^{ks}\frac{\partial \omega_{rs}}{\partial x^l}.
	\end{equation*}
Then, it can be checked that, as we expected, Eq. \eqref{eq:integrability7} is true if and only if $d\omega = 0$, that is, if $\omega$ is symplectic.
\end{example}

\section{Proof of Theorem \ref{thm:s6structures}}
\label{SECTION:PROOFMAINTHEOREM}

Before detailing the proof of Theorem \ref{thm:s6structures}, we must justify why we use certain $\fm$-linear combinations of the structures $\J_{1,\J}$, $\J_g$, $\J_\omega$ in order to find a weak generalized complex structure. The argument is grounded on the properties of commutation and anti-commutation of the previously introduced structures. As we are only working with endomorphisms $\J\colon \TM\to \TM$ with $\J^2 = -\mathcal Id$, we introduce the following concept.

\begin{definition}
\label{def:hypercomplexstructure}
A \emph{generalized almost hypercomplex structure} is a structure $(\J_1, \J_2, \J_3)$ formed by three weak generalized almost complex structures such that they anti-commute (that is, $\J_i\J_j = -\J_j\J_i$ for $i\neq j$) and $\J_3 = \J_2 \J_1$.
\end{definition}

When the base manifold is endowed with an almost Hermitian or an almost Norden structure, a generalized almost hypercomplex structure is naturally induced, as it is shown in the next result.

\begin{proposition}[{\cite[Prop. 5.4]{ETAYOGOMEZNICOLASSANTAMARIA2024}}]
\label{prop:triplemjg}
Let $(M, J, g)$ be a manifold and $\var\in \{-1, 1\}$ such that $J^2 = -Id$ and $g(JX, JY) = \var g(X, Y)$ for every $X, Y\in \xm$, and let $\omega(\cdot, \cdot) = g(J\cdot, \cdot)$. Then, $(\J_{\var, J}, \J_g, \J_\omega)$ is a generalized almost hypercomplex structure.
\end{proposition}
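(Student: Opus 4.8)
The plan is to work with the matrix representation \eqref{eq:matrixendomorphism} of the three structures and to reduce everything to a short list of identities among the musical isomorphisms of $g$ and $\omega$ and the dual endomorphism $J^{*}$. First note that, since $g$ is nondegenerate and $J$ is invertible ($J^{2}=-Id$ forces $J^{-1}=-J$), the tensor $\omega=g(J\cdot,\cdot)$ is nondegenerate as well, so $\flat_{\omega}\colon TM\to T^{*}M$ and $\sharp_{\omega}=\flat_{\omega}^{-1}$ are well-defined and $\J_{\omega}^{2}=-\mathcal Id$. Together with Examples \ref{ex:complexlambdaJ} and \ref{ex:complexg}, this shows that $\J_{\var,J}$, $\J_{g}$ and $\J_{\omega}$ are all weak generalized almost complex structures, so by Definition \ref{def:hypercomplexstructure} it remains to verify $\J_{\omega}=\J_{g}\J_{\var,J}$ together with the three pairwise anti-commutation relations.

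Next I would translate the compatibility hypothesis into operator identities. Writing $g(JX,JY)=\var g(X,Y)$ in terms of $\flat_{g}$ and $J^{*}$ gives $J^{*}\flat_{g}J=\var\flat_{g}$; since $(J^{*})^{2}=(J^{2})^{*}=-Id$, hence $(J^{*})^{-1}=-J^{*}$, this rearranges to
\begin{equation*}
\flat_{g}J=-\var J^{*}\flat_{g},
\end{equation*}
and taking inverses of both sides yields the dual identity $\sharp_{g}J^{*}=-\var J\sharp_{g}$. I would also record the elementary relations linking the two families of musical maps, $\flat_{\omega}=\flat_{g}J$ and $\sharp_{\omega}=J^{-1}\sharp_{g}=-J\sharp_{g}$, which come straight from $\omega=g(J\cdot,\cdot)$ and $\sharp_{\bullet}=\flat_{\bullet}^{-1}$.

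The remaining step is a pair of block computations. One finds
\begin{equation*}
\J_{g}\J_{\var,J}=
\begin{pmatrix} 0 & -\sharp_{g}\\ \flat_{g} & 0\end{pmatrix}
\begin{pmatrix} J & 0\\ 0 & \var J^{*}\end{pmatrix}
=\begin{pmatrix} 0 & -\var\sharp_{g}J^{*}\\ \flat_{g}J & 0\end{pmatrix},
\end{equation*}
and substituting $\flat_{g}J=\flat_{\omega}$ and $-\var\sharp_{g}J^{*}=\var^{2}J\sharp_{g}=J\sharp_{g}=-\sharp_{\omega}$ identifies the right-hand side with $\J_{\omega}$. The same identities applied to $\J_{\var,J}\J_{g}$, whose blocks are $-J\sharp_{g}=\sharp_{\omega}$ and $\var J^{*}\flat_{g}=-\flat_{\omega}$, give $\J_{\var,J}\J_{g}=-\J_{\omega}=-\J_{g}\J_{\var,J}$. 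The other two anti-commutations are then purely formal: using $\J_{\var,J}^{2}=\J_{g}^{2}=-\mathcal Id$ and the anti-commutation just proved,
\begin{equation*}
\J_{\var,J}\J_{\omega}=\J_{\var,J}\J_{g}\J_{\var,J}=-\J_{g}\J_{\var,J}^{2}=\J_{g},\qquad
\J_{\omega}\J_{\var,J}=\J_{g}\J_{\var,J}^{2}=-\J_{g},
\end{equation*}
and similarly $\J_{g}\J_{\omega}=\J_{g}^{2}\J_{\var,J}=-\J_{\var,J}=-\J_{\omega}\J_{g}$. Combining these with $\J_{3}:=\J_{\omega}=\J_{g}\J_{\var,J}=\J_{2}\J_{1}$ gives exactly the conditions of Definition \ref{def:hypercomplexstructure}.

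There is no genuine obstacle here: the proposition is a direct verification once the block calculus is set up. The only places that need care are the sign bookkeeping when converting $g(JX,JY)=\var g(X,Y)$ into $\flat_{g}J=-\var J^{*}\flat_{g}$ — where one must not forget $(J^{*})^{-1}=-J^{*}$ — and when reconciling the two expressions $-J\sharp_{g}$ and $\var\sharp_{g}J^{*}$ for $\sharp_{\omega}$, which uses $\var^{2}=1$.
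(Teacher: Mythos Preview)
Your proof is correct and follows essentially the same approach as the paper. The paper does not give a self-contained proof of this proposition (it is quoted from \cite{ETAYOGOMEZNICOLASSANTAMARIA2024}), but the paragraph following the statement records exactly the musical-isomorphism identities $\flat_\omega=\flat_g J=-\var J^*\flat_g$ and $\var\sharp_\omega=\sharp_g J^*=-\var J\sharp_g$ that you derive and use; your block computations of $\J_g\J_{\var,J}$ and $\J_{\var,J}\J_g$ are precisely the verification these identities support.
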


The anti-commutation of these three generalized structures is based on the fact that, for such a manifold $(M, J, g)$ with $J^2 = -Id$ and $g(JX, JY) = \var g(X, Y)$, the musical isomorphisms of $g$ and $\omega$ are related between them (see \cite[Prop. 2.8]{ETAYOGOMEZNICOLASSANTAMARIA2022}):
	\begin{gather*}
	  \flat_\omega = \flat_g J = -\var J^*\flat_g,   \\
	  \var \sharp_\omega = \sharp_g J^* = - \var J\sharp_g.
	\end{gather*}
Because this is the only generalized almost hypercomplex structure that can be generated from $(M, J, g)$ using the endomorphisms $\J_{\lambda,J}, \J_g, \J_\omega$ from Eqs. (\ref{eq:complexlambdaJ}-\ref{eq:complexg}) for $\lambda\in \{-1, 1\}$, the following corollary is easily inferred.

\begin{corollary}
\label{cor:sphericalcombination}
Let $(M, J, g)$ be an almost Hermitian manifold (i.e., $\var = 1$). Then, a linear combination $a\J_{\lambda,J} + b\J_g + c\J_\omega$ with $a,b,c\in \fm$ is a weak generalized almost complex structure if and only if $\lambda = 1$ and $a^2 + b^2 + c^2 = 1$. The structure is strong if and only if $c \equiv \pm 1$ (or, equivalently, if and only if $a = b \equiv 0$).
\end{corollary}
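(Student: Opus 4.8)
The plan is to deduce everything from Proposition~\ref{prop:triplemjg}. Since $(M,J,g)$ is almost Hermitian, $\var=1$, so that proposition says $(\J_{1,J},\J_g,\J_\omega)$ is a generalized almost hypercomplex structure: each of these three endomorphisms squares to $-\mathcal Id$, and any two of them anti-commute. Writing $\mathcal K=a\J_{1,J}+b\J_g+c\J_\omega$ with $a,b,c\in\fm$ and using that every $\J_\bullet$ is a bundle morphism (hence $\fm$-linear, so that the functions factor out of all compositions), the expansion of $\mathcal K^2$ has three ``square'' terms summing to $-(a^2+b^2+c^2)\mathcal Id$ and three ``mixed'' terms which pair up into vanishing anti-commutators. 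Hence
\[
  \mathcal K^2=-(a^2+b^2+c^2)\,\mathcal Id ,
\]
and, $\mathcal K$ being automatically a bundle endomorphism of $\TM$, it is a weak generalized almost complex structure if and only if $a^2+b^2+c^2=1$. This gives the ``if'' direction, and also the ``only if'' direction when $\lambda=1$.

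For $\lambda=-1$ one checks instead, using the relations between the musical isomorphisms of $g$ and $\omega$ recalled above (\cite[Prop.~2.8]{ETAYOGOMEZNICOLASSANTAMARIA2022}), that $\J_{-1,J}$ \emph{commutes} with both $\J_g$ and $\J_\omega$, the products $\J_{-1,J}\J_g$ and $\J_{-1,J}\J_\omega$ being pointwise linearly independent non-scalar endomorphisms that interchange $TM$ and $T^*M$ (and square to $+\mathcal Id$). Since $\J_g$ and $\J_\omega$ still anti-commute, the expansion now reads $\mathcal K^2=-(a^2+b^2+c^2)\mathcal Id+2a\bigl(b\,\J_{-1,J}\J_g+c\,\J_{-1,J}\J_\omega\bigr)$, so $\mathcal K^2=-\mathcal Id$ forces the block-swapping part to vanish, i.e.\ $ab\equiv ac\equiv 0$; thus the $\J_{-1,J}$-term drops out and no combination genuinely involving $\J_{-1,J}$ qualifies.

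For the strong/weak dichotomy I would use that $\J_{1,J}$ and $\J_g$ are symmetric with respect to $\G_0$ (as noted right after Examples~\ref{ex:complexlambdaJ}--\ref{ex:complexg}), while $\J_\omega$ satisfies the strong condition \eqref{eq:strongcondition}. Then, for $\mathcal K=a\J_{1,J}+b\J_g+c\J_\omega$,
\[
  \G_0(\mathcal K u,v)+\G_0(u,\mathcal K v)=2a\,\G_0(u,\J_{1,J}v)+2b\,\G_0(u,\J_g v)=2\,\G_0\bigl(u,(a\J_{1,J}+b\J_g)v\bigr),
\]
so $\mathcal K$ is strong if and only if $a\J_{1,J}+b\J_g=0$ (using nondegeneracy of $\G_0$). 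Because $\J_{1,J}$ preserves the splitting $TM\oplus T^*M$ whereas $\J_g$ exchanges its two summands, and $J$ is invertible, this forces $a\equiv b\equiv 0$; together with $a^2+b^2+c^2=1$ it gives $c\equiv\pm1$. The converse is immediate: $a=b\equiv0$, $c\equiv\pm1$ gives $\mathcal K=\pm\J_\omega$, which is strong.

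The heart of the argument is the one-line identity $\mathcal K^2=-(a^2+b^2+c^2)\mathcal Id$; what needs a little care is only bookkeeping: that $a,b,c$ are functions rather than constants (harmless, since $\G_0$ is $\fm$-bilinear and each $\J_\bullet$ is $\fm$-linear), the two elementary pointwise linear-independence facts used to extract $a\equiv b\equiv0$, and, in the case $\lambda=-1$, spelling out that the statement asserts precisely that it is for $\lambda=1$ that the whole sphere $a^2+b^2+c^2=1$ consists of weak generalized almost complex structures.
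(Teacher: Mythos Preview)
Your argument is correct, and for the weak part ($\mathcal K^2=-\mathcal Id$ via the hypercomplex anti-commutation) it coincides with the paper's. Your treatment of $\lambda=-1$ is in fact more explicit than the paper's, which simply invokes Proposition~\ref{prop:triplemjg} and the remark preceding the corollary that $(\J_{\var,J},\J_g,\J_\omega)$ is the only hypercomplex triple obtainable from these building blocks; your direct verification that $\J_{-1,J}$ \emph{commutes} with $\J_g$ and $\J_\omega$ makes this concrete. One small wrinkle: $ab\equiv ac\equiv 0$ still allows $a\equiv\pm1$, $b=c\equiv0$, i.e.\ $\pm\J_{-1,J}$ itself, so ``no combination genuinely involving $\J_{-1,J}$ qualifies'' is a slight overstatement; your final paragraph's reading of the corollary---that $\lambda=1$ is the value for which the \emph{entire} sphere $a^2+b^2+c^2=1$ consists of weak structures---is the right fix and matches the paper's intent.

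The genuinely different step is the strong/weak dichotomy. The paper expands $\G_0(\J u,\J v)$ into $(-a^2-b^2+c^2)\G_0(u,v)+2ac\,\G_0(u,\J_g v)-2bc\,\G_0(\J_{1,J}u,v)$ and reads off $-a^2-b^2+c^2=1$, which together with $a^2+b^2+c^2=1$ forces $a=b\equiv0$. Your route---using directly that $\J_{1,J},\J_g$ are $\G_0$-symmetric while $\J_\omega$ is $\G_0$-skew to obtain $\G_0(\mathcal Ku,v)+\G_0(u,\mathcal Kv)=2\,\G_0\bigl(u,(a\J_{1,J}+b\J_g)v\bigr)$, and then killing $a\J_{1,J}+b\J_g$ via the block-diagonal/off-diagonal split---is shorter and sidesteps the need to argue that the three bilinear forms in the paper's expansion are independent. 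Both approaches are valid; yours is a bit more transparent.
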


\begin{proof}
The value of $\lambda$ must be equal to $1$ because $g(JX, JY) = g(X, Y)$ for every $X, Y\in \xm$ and, from Proposition \ref{prop:triplemjg}, $\lambda = \var = 1$ and $(\J_{1, J}, \J_g, \J_\omega)$ is a generalized almost hypercomplex structure. Apart from that, we compute the square of any endomorphism obtained as a linear combination $\J = a\J_{1,J} + b\J_g + c\J_\omega$ with $a,b,c\in \fm$:
	\begin{equation*}
	  \begin{split}
	    \J^2 &= (a\J_{1,J} + b\J_g + c\J_\omega)^2  \\
	    	 &= a^2\J_{1,J}^2 + b^2\J_g^2 + c^2\J_\omega^2 + ab(\J_{1,J}\J_g + \J_g\J_{1,J}) + ac(\J_{1,J}\J_\omega + \J_\omega\J_{1,J}) + bc(\J_g\J_\omega + \J_\omega\J_g)  \\
	    	 &= -(a^2 + b^2 + c^2)\mathcal Id.
	  \end{split}
	\end{equation*}
Therefore, it must be $a^2 + b^2 + c^2 = 1$. To check whether $\J$ is strong or not, we calculate $\G_0(\J u, \J v)$ for any $u,v\in \GTM$:
	\begin{equation*}
	  \begin{split}
	    \G_0(\J u, \J v) &= \G_0(a\J_{1,J} u + b\J_g u + c\J_\omega u, a\J_{1,J} v + b\J_g v + c\J_\omega v)  \\
	    	 			 &= a^2\G_0(\J_{1,J} u, \J_{1,J} v) + b^2\G_0(\J_g u, \J_g v) + c^2\G_0(\J_\omega u, \J_\omega v) + ab(\G_0(\J_{1,J} u, \J_g v)  \\
	    	 			 &\phantom{=}  + \G_0(\J_g u, \J_{1,J} v)) + ac(\G_0(\J_{1,J} u, \J_\omega v) + \G_0(\J_\omega u, \J_{1,J} v)) + bc(\G_0(\J_g u, \J_\omega v) + \G_0(\J_\omega u, \J_g v))  \\
	    	 			 &= -a^2\G_0(u, v) - b^2\G_0(u, v) + c^2\G_0(u, v) + ab(\G_0(\J_\omega u, v) + \G_0(u, \J_\omega v))  \\
	    	 			 &\phantom{=} + ac(\G_0(u, \J_g v) + \G_0(\J_g u, v)) - bc(\G_0(\J_{1,J} u, v) + \G_0(u, \J_{1,J} v))  \\
	    	 			 &= (-a^2 - b^2 + c^2)\G_0(u, v) + 2ac\G_0(u, \J_g v) - 2bc\G_0(\J_{1,J} u, v).
	  \end{split}
	\end{equation*}
Thus, if the structure $\J$ is strong then $-a^2 - b^2 + c^2 = 1$. This condition combined with $a^2 + b^2 + c^2 = 1$ shows that it must be $c \equiv \pm 1$. The converse statement is immediate to check.
\end{proof}
We will call such a $\fm$-linear combination $a\J_{1,J} + b\J_g + c\J_\omega$ a \emph{spherical combination} of the generalized almost hypercomplex structure.

Our main goal is to ask ourselves if, for the nearly Kähler structure $(\Sph^6, J, g)$ inherited from the pure octonions product, there is any integrable spherical combination of the generalized almost hypercomplex structure $(\J_{1, J}, \J_g, \J_\omega)$. We will work in spherical coordinates for $\Sph^6\subset \R^7$: if we take angular coordinates $u^1,\dots, u^6$ such that $u^1,\dots,u^5\in(0,\pi)$ and $u^6\in (0,2\pi)$, then the coordinates of the six-dimensional sphere are the following:
	\begin{equation*}
	  \arraycolsep=1.4pt
	  \left\lbrace
	  \begin{array}{rl}
		x^1 &= \cos(u^1),  \\
		x^2 &= \sin(u^1)\cos(u^2),  \\
		x^3 &= \sin(u^1)\sin(u^2)\cos(u^3),  \\
		    & \vdots  \\
		x^6 &= \sin(u^1)\sin(u^2)\sin(u^3)\sin(u^4)\sin(u^5)\cos(u^6),  \\
		x^7 &= \sin(u^1)\sin(u^2)\sin(u^3)\sin(u^4)\sin(u^5)\sin(u^6).
	  \end{array}
	  \right.
	\end{equation*}
The metric $g$ on $\Sph^6$ is the one induced by the Euclidean metric on $\R^7$, where $\Sph^6$ is seen as a submanifold of the seven-dimensional Euclidean space. Therefore, in spherical coordinates this metric is given by
	\begin{equation}
	  \label{eq:metriclocal}
	  g_{ij} = \left\lbrace
	  \begin{array}{ll}
		1 								& \text{if }i = j = 1,  \\
		\sin^2(u^1)\dots\sin^2(u^{i-1}) & \text{if }i = j\neq 1,  \\
		0 								& \text{if }i \neq j.
	  \end{array}
	  \right.
	\end{equation}
On the other hand, the local representation of the inverse metric can be easily computed from the data in Eq. \eqref{eq:metriclocal}:
	\begin{equation}
	  \label{eq:inversemetriclocal}
	  g^{ij} = \left\lbrace
	  \begin{array}{ll}
		1 										  & \text{if }i = j = 1,  \\
		\dfrac{1}{\sin^2(u^1)\dots\sin^2(u^{i-1})} & \text{if }i = j\neq 1,  \\
		0 										  & \text{if }i \neq j.
	  \end{array}
	  \right.
	\end{equation}

The almost complex structure $J$ on $\Sph^6$ is inherited by the pure octonions product in $\R^7$. If we denote this product by $\times$, in any point $p\in \Sph^6$ the structure $J_p\colon T_p\Sph^6\to T_p\Sph^6$ is defined as
	\begin{equation*}
	  J_p w = p\times w,
	\end{equation*}
for any $w\in T_pM$. The explicit multiplication table can be consulted, for example, in \cite[Ch. 19]{CHEN2011}. As the explicit expression of $J$ in local coordinates is really long and it would not fit in one page, we will just give some specific values of $J^i_j$. In particular, we will use that
	\begin{equation}
	  \label{eq:Jpartial1}
	  J\partial_1 = \frac{\cos(u^3)}{\sin(u^1)}\partial_2 - \frac{\cos(u^2)\sin(u^3)}{\sin(u^1)\sin(u^2)}\partial_3 + \frac{\cos(u^5)}{\sin(u^1)}\partial_4 - \frac{\cos(u^4)\sin(u^5)}{\sin(u^1)\sin(u^4)}\partial_5 - \frac{1}{\sin(u^1)}\partial_6,
	\end{equation}
and
	\begin{equation}
	  \label{eq:J*du1}
	    \begin{split}
	      J^*du^1 &= -\sin(u^1)\cos(u^3) du^2 + \sin(u^1)\cos(u^2)\sin(u^2)\sin(u^3) du^3 - \sin(u^1)\sin^2(u^2)\sin^2(u^3)\cos(u^5) du^4  \\
	      		  &\phantom{=} + \sin(u^1)\sin^2(u^2)\sin^2(u^3)\cos(u^4)\sin(u^4)\sin(u^5) du^5 + \sin(u^1)\sin^2(u^2)\sin^2(u^3)\sin^2(u^4)\sin^2(u^5) du^6.
	    \end{split}
	\end{equation}
Therefore, as $J\partial_1 = J_1^k\partial_k$ and $J^*du^1 = J^1_kdu^k$, we have the expressions for each $J^k_1$ and $J^1_k$.

It can be checked that, for any $X,Y\in \mathfrak X(\Sph^6)$, we have $g(JX, JY) = g(X, Y)$. Therefore, Corollary \ref{cor:sphericalcombination} assures that any spherical spherical combination of the generalized structures $(\J_{1,J}, \J_g, \J_\omega)$ is a weak generalized almost complex structure.

Because the proof of Theorem \ref{thm:s6structures} is quite extensive, we firstly check what happens when the contribution of $\J_g$ in the spherical combination is null (i.e. when $b \equiv 0$).

\begin{proposition}
\label{prop:bequal0}
Let $(\Sph^6, J,\omega)$ be the six-dimensional sphere with its usual nearly Kähler structure. Then, there is not any spherical combination $\J = a\J_{1,J} + c\J_\omega$ with $a,c\in \mathfrak F(\Sph^6)$ and $a^2 + c^2 = 1$ such that the weak generalized almost complex structure $\J$ is integrable.
\end{proposition}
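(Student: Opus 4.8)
The plan is to apply Theorem \ref{thm:integrabilitycoord} to the specific endomorphism $\J = a\J_{1,J} + c\J_\omega$ and derive a contradiction from the necessary integrability conditions \eqref{eq:integrability1}--\eqref{eq:integrability8}. First I would write $\J$ in matrix form using Eqs. \eqref{eq:complexlambdaJ} and \eqref{eq:complexomega}: since $\J_g$ does not appear, we get $A = aJ$, $B = -c\,\sharp_\omega$, $C = c\,\flat_\omega$, $D = aJ^*$, and recall $\flat_\omega = \flat_g J$, $\sharp_\omega = \sharp_g J^* = -J\sharp_g$ from the relations listed after Proposition \ref{prop:triplemjg}. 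In local (spherical) coordinates this gives $A^i_j = a J^i_j$, $D^i_j = a J^i_j$, $C_{ij} = c\,\omega_{ij}$, $B^{ij} = -c\,\omega^{ij}$, where $\omega_{ij} = g_{ik}J^k_j$ and $\omega^{ij}$ is its inverse; the latter are known explicitly from Eqs. \eqref{eq:metriclocal}, \eqref{eq:inversemetriclocal}, \eqref{eq:Jpartial1}, \eqref{eq:J*du1} together with the (long) coordinate expression of $J$.

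Next I would substitute these into one or two of the eight necessary conditions and try to extract the most tractable contradiction. The natural choice is Eq. \eqref{eq:integrability7}, $B^{ik}\partial_k B^{jl} - B^{jk}\partial_k B^{il} - B^{kl}\partial_k B^{ij} = 0$: plugging in $B^{ij} = -c\,\omega^{ij}$ turns this into a relation that mixes the derivatives $\partial_k c$ against $c^2$ times the Jacobi-type combination of $\omega^{ij}$; because $\omega$ on $\Sph^6$ is the nearly Kähler fundamental form with $d\omega \neq 0$, the purely $\omega$-dependent part does not vanish, so this forces $\partial_k c$ to satisfy an overdetermined system. One then combines this with, say, Eq. \eqref{eq:integrability1} (which for $b \equiv 0$ involves $a^2 N_J$ together with terms proportional to $a\,\partial a$ and to $c\,\partial c$ coupled to $d\omega$), using the constraint $a^2 + c^2 = 1$ (hence $a\,\partial_k a = -c\,\partial_k c$) to eliminate one of the two functions. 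The goal is to show that the resulting conditions have no solution with $a^2 + c^2 = 1$: either they force $c$ (and hence $a$) locally constant and then the constant case fails because neither $\J_{1,J}$ (as $J$ is not integrable on $\Sph^6$, by Blanchard's theorem or directly $N_J \neq 0$) nor $\J_\omega$ (as $d\omega \neq 0$) nor any fixed nontrivial combination is integrable; or the non-constant case produces an immediate incompatibility at some coordinate point.

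The main obstacle I anticipate is bookkeeping: the explicit form of $J^i_j$ in spherical coordinates is enormous, so one wants to avoid using it wholesale. The practical strategy is therefore to test the integrability conditions only on a carefully chosen small set of index values $(i,j,l)$ — for instance those involving the indices $1,2,3,6$, for which $J\partial_1$ and $J^*du^1$ are given explicitly in Eqs. \eqref{eq:Jpartial1}, \eqref{eq:J*du1} — and evaluate at a convenient point (e.g. a point where $\sin u^i$ take simple values), so that the system collapses to a handful of scalar equations in $a$, $c$, $\partial_k a$, $\partial_k c$ whose inconsistency can be exhibited by hand. Verifying that this small subsystem is already contradictory, rather than wrestling with the full tensor identities, is what makes the argument feasible; assembling the cleanest such subsystem is the step that will require the most care.
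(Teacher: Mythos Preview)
Your overall strategy---use Theorem \ref{thm:integrabilitycoord} in spherical coordinates, test a few well-chosen index values, and exploit the explicit data in Eqs. \eqref{eq:metriclocal}--\eqref{eq:J*du1}---is sound, but the paper's proof uses a different and considerably cleaner combination of the eight conditions than the one you propose. Instead of working with Eq. \eqref{eq:integrability7} and then Eq. \eqref{eq:integrability1}, the paper adds Eq. \eqref{eq:integrability3} to the $i\leftrightarrow j$ swap of Eq. \eqref{eq:integrability5}; almost everything cancels and one is left with the single relation
\[
  B^{kl}\,\frac{\partial}{\partial x^k}\bigl(A^j_i + D^j_i\bigr)=0.
\]
For $\J=a\J_{1,J}+c\J_\omega$ one has $A^j_i=D^j_i=aJ^j_i$ and $B^{kl}=c\,g^{ks}J^l_s$, so this becomes $c\,g^{ks}J^l_s\,\partial_k(aJ^j_i)=0$. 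Two index choices ($i=l=1$, $j=2$ and $i=l=1$, $j=3$) and a linear combination that kills the $\partial_k a$ terms then yield $ac$ times an explicitly nonzero trigonometric function, forcing $a\equiv 0$ or $c\equiv 0$; each extreme case is dispatched by the known facts $N_J\not\equiv 0$ and $d\omega\not\equiv 0$.

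Your route via Eq. \eqref{eq:integrability7} is not wrong in principle, but it only involves $c$, and the step ``either $c$ is forced constant or the system is inconsistent'' is not carried out; establishing it would take real work. More importantly, your fallback claim that ``any fixed nontrivial combination is not integrable'' is exactly part of what the proposition asserts, so invoking it is circular. (There is a patch you almost have: for $a,c$ constant, Eq. \eqref{eq:integrability7} reduces to $c^2$ times the condition equivalent to $d\omega=0$ from Example \ref{ex:integrabilityomega}, which forces $c=0$; but you would still need to rule out nonconstant $c$.) The paper's choice of equations sidesteps all of this by producing $ac=0$ directly, so the constant nontrivial case never arises.
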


\begin{proof}
To see that such a weak generalized almost complex structure $\J$ cannot be integrable, we will analyse the necessary conditions shown in Theorem \ref{thm:integrabilitycoord}. In particular, we will use both Eqs. (\ref{eq:integrability3}, \ref{eq:integrability5}). Firstly, we interchange the indices $i,j$ in Eq. \eqref{eq:integrability5}, obtaining the following expression:
	\begin{equation*}
	  B^{jk}\frac{\partial A^l_i}{\partial x^k} - A^k_i\frac{\partial B^{jl}}{\partial x^k} + A^l_k\frac{\partial B^{jk}}{\partial x^i} + B^{kl}\left(\frac{\partial D^j_k}{\partial x^i} - \frac{\partial D^j_i}{\partial x^k}\right) = 0.
	\end{equation*}
This equation is quite similar to Eq. \eqref{eq:integrability3}; in fact, if both expressions are added up some terms will vanish, resulting in the following identity:
	\begin{equation*}
	  B^{kl}\left(\frac{\partial A_i^j}{\partial x^k} + \frac{\partial D_i^j}{\partial x^k}\right) = 0.
	\end{equation*}
This condition must be fulfilled by any integrable structure $\J$ with $\J^2 = -\mathcal Id$. Taking now the spherical combination $\J = a\J_{1,J} + c\J_\omega$ and using its matrix form, it is $A = aJ$, $B = -c\sharp_\omega = cJ\sharp_g$, $C = c\flat_\omega = c\flat_g J$ and $D = aJ^*$. The local representations of these morphisms are
	\begin{equation*}
	  A_i^j = aJ_i^j, \enspace\enspace B^{ij} = cg^{ik}J_k^j, \enspace\enspace C_{ij} = cJ_i^k g_{kj}, D^i_j = aJ^i_j,
	\end{equation*}
so the previous condition is turned into
	\begin{equation*}
	  cg^{ks}J_s^l\frac{\partial}{\partial u^k}\left(a J^j_i\right) = 0.
	\end{equation*}
If this equation is developed, it can be written in the following form:
	\begin{equation}
	  \label{eq:equationproofb=0}
	  ac\left(g^{ks}J_s^l\frac{\partial J^j_i}{\partial u^k}\right) + c\frac{\partial a}{\partial u^k}\left(g^{ks}J_s^l J_i^j\right) = 0.
	\end{equation}
	
In this last equation, there are two clearly differentiated parts: one related to the product $ac$, and other related to the product of $c$ with each derivative of $a$ with respect to $u^k$. The idea is to compare Eq. \eqref{eq:equationproofb=0} for different values of the indices $i,j,k$ and, by combining them, to obtain restraints for the functions $a$ and $c$. We use the combinations of indices $i=l=1$, $j=2$; and $i=l=1$, $j=3$:
\begin{itemize}

\item $i=l=1$, $j=2$:
	\begin{equation}
	  \label{eq:b=0il1j2}
	  ac\left(g^{ks}J_s^1\frac{\partial J^2_1}{\partial u^k}\right) + c\frac{\partial a}{\partial u^k}\left(g^{ks}J_s^1 J_1^2\right) = 0.
	\end{equation}
	
\item $i=l=1$, $j=3$:
	\begin{equation}
	  \label{eq:b=0il1j3}
	  ac\left(g^{ks}J_s^1\frac{\partial J^3_1}{\partial u^k}\right) + c\frac{\partial a}{\partial u^k}\left(g^{ks}J_s^1 J_1^3\right) = 0.
	\end{equation}

\end{itemize}
From Eq. \eqref{eq:Jpartial1}, we know that $J_1^2 = \frac{\cos(u^3)}{\sin(u^1)}$ and $J_1^3 = -\frac{\cos(u^2)\sin(u^3)}{\sin(u^1)\sin(u^2)}$. We can multiply Eq. \eqref{eq:b=0il1j2} by $-J_1^3$ and Eq. \eqref{eq:b=0il1j3} by $J_1^2$ and add them up, obtaining
	\begin{equation*}
	  ac\left[g^{ks}J_s^1\left(J_1^2\frac{\partial J_1^3}{\partial u^k} - J_1^3\frac{\partial J_1^2}{\partial u^k}\right)\right] = 0.
	\end{equation*}
If the sum over the indices $k,s$ explicitly, as $J_1^2, J_1^3$ only depend on $u^1,u^2,u^3$, and knowing that $g^{ij} = 0$ for $i\neq j$ and $J^1_1 = 0$, the following expression is inferred:
	\begin{equation*}
	  \begin{split}
	    0 &= ac\left[g^{1s}J_s^1\left(J_1^2\frac{\partial J_1^3}{\partial u^1} - J_1^3\frac{\partial J_1^2}{\partial u^1}\right) + g^{2s}J_s^1 J_1^2\frac{\partial J_1^3}{\partial u^2} + g^{3s}J_s^1\left(J_1^2\frac{\partial J_1^3}{\partial u^3} - J_1^3\frac{\partial J_1^2}{\partial u^3}\right)\right]  \\
	      &= ac\left[g^{22}J_2^1 J_1^2\frac{\partial J_1^3}{\partial u^2} + g^{33}J_3^1\left(J_1^2\frac{\partial J_1^3}{\partial u^3} - J_1^3\frac{\partial J_1^2}{\partial u^3}\right)\right].
	  \end{split}
	\end{equation*}
We calculate explicitly the function that is multiplying $ac$, taking from Eq. \eqref{eq:J*du1} that $J^1_2 = -\sin(u^1)\cos(u^3)$ and $J^1_3 = \sin(u^1)\cos(u^2)\sin(u^2)\sin(u^3)$:
	\begin{equation*}
	  \begin{split}
	  g^{22}J_2^1 J_1^2\frac{\partial J_1^3}{\partial u^2} &= -\frac{\sin(u^1)\cos(u^3)}{\sin^2(u^1)}\frac{\cos(u^3)}{\sin(u^1)}\frac{\sin(u^3)}{\sin(u^1)\sin^2(u^2)} = -\frac{\cos^2(u^3)\sin(u^3)}{\sin^3(u^1)\sin^2(u^2)},  \\
	  g^{33}J_3^1 J_1^2\frac{\partial J_1^3}{\partial u^3} &= -\frac{\sin(u^1)\cos(u^2)\sin(u^2)\sin(u^3)}{\sin^2(u^1)\sin^2(u^2)}\frac{\cos(u^3)}{\sin(u^1)}\frac{\cos(u^2)\cos(u^3)}{\sin(u^1)\sin(u^2)} = -\frac{\cos^2(u^2)\cos^2(u^3)\sin(u^3)}{\sin^3(u^1)\sin^2(u^2)},  \\
	  g^{33}J_3^1 J_1^3\frac{\partial J_1^2}{\partial u^3} &= \frac{\sin(u^1)\cos(u^2)\sin(u^2)\sin(u^3)}{\sin^2(u^1)\sin^2(u^2)}\frac{\cos(u^2)\sin(u^3)}{\sin(u^1)\sin(u^2)}\frac{\sin(u^3)}{\sin(u^1)} = \frac{\cos^2(u^2)\sin^3(u^3)}{\sin^3(u^1)\sin^2(u^2)}.
	  \end{split}
	\end{equation*}
Joining all together,
	\begin{equation*}
	  -ac\frac{\sin(u^3)(\cos^2(u^3) + \cos^2(u^2))}{\sin^3(u^1)\sin^2(u^2)} = 0.
	\end{equation*}
Therefore, it must be $a\equiv 0$ or $c\equiv 0$. However, that would imply $\J = \pm\J_\omega$, which is not integrable because $\omega$ is not a symplectic form; or $\J = \pm\J_{1,J}$, which is also not integrable because $J$ is not integrable. In consequence, there are no functions $a,c\in \mathfrak F(\Sph^6)$ such that $a^2 + c^2 = 1$ and $a\J_{1,J} + c\J_\omega$ is a weak generalized complex structure.
\end{proof}

After proving Proposition \ref{prop:bequal0}, we are in a good position to prove the general case in Theorem \ref{thm:s6structures}, taking any possible expression for the function $b$.

\begin{proof}[Proof of Theorem \ref{thm:s6structures}]
We take a spherical combination $\J = a\J_{1,J} + b\J_g + c\J_\omega$ with $a,b,c\in \mathfrak F(\Sph^6)$. We will work mainly with Eq. \eqref{eq:integrability7}; specifically, we symmetrise the expression with respect to the indices $i,j$. To do this, we interchange the indices $i,j$ in Eq. \eqref{eq:integrability7}:
	\begin{equation*}
	  B^{jk}\frac{\partial B^{il}}{\partial x^k} - B^{ik}\frac{\partial B^{jl}}{\partial x^k} - B^{kl}\frac{\partial B^{ji}}{\partial x^k} = 0.
	\end{equation*}
We add this formula to Eq. \eqref{eq:integrability7}, obtaining the following expression in local coordinates:
	\begin{equation}
	  \label{eq:symmetricequationB}
	  B^{kl}\frac{\partial}{\partial x^k}(B^{ij} + B^{ji}) = 0.
	\end{equation}
This condition must be satisfied for a weak generalized complex structure. Using the matrix notation from Eq. \eqref{eq:matrixendomorphism} for the almost complex structure $\J$, it must be $B = -(b\sharp_g + c\sharp_\omega)$. Taking local coordinates, as $\sharp_\omega = -J\sharp_g = \sharp_g J^*$, the local representation for $B$ is
	\begin{equation*}
	  B^{ij} = cg^{ik}J_k^j - bg^{ij} = -cg^{jk}J_k^i -  bg^{ij}.
	\end{equation*}
Then, making substitutions in Eq. \eqref{eq:symmetricequationB}, we have that the following condition must be met:
	\begin{equation}
	  \label{eq:explicitsymmetricequationB}
	  bc\left(g^{ks}J_s^l\frac{\partial g^{ij}}{\partial u^k}\right) - b^2\left(g^{kl}\frac{\partial g^{ij}}{\partial u^k}\right) + c\frac{\partial b}{\partial u^k}\left(g^{ks}J_s^lg^{ij}\right) - b\frac{\partial b}{\partial u^k}\left(g^{kl}g^{ij}\right) = 0.
	\end{equation}

We want now to compare the expression in Eq. \eqref{eq:explicitsymmetricequationB} for different values of $i,j,l$ and hopefully find any condition for the functions $b,c$. We work in spherical coordinates and use the explicit local expressions for $g$ and $J$; in particular, we will use the values $g^{11} = 1 $, $g^{22} = \frac{1}{\sin^2(u^1)}$ and $J^1_1 = 0$. Eq. \eqref{eq:explicitsymmetricequationB} is compared for the indices $i=j=l=1$; and $i=j=2$, $l=1$:
\begin{itemize}

\item $i=j=l=1$:
	\begin{equation*}
	  \begin{split}
	    0 &= bc\left(g^{ks}J_s^1\frac{\partial g^{11}}{\partial u^k}\right) - b^2\left(g^{k1}\frac{\partial g^{11}}{\partial u^k}\right) + c\frac{\partial b}{\partial u^k}\left(g^{ks}J_s^1g^{11}\right) - b\frac{\partial b}{\partial u^k}\left(g^{k1}g^{11}\right)  \\
	   	  &= c\frac{\partial b}{\partial u^k}\left(g^{ks}J_s^1\right) - b\frac{\partial b}{\partial u^1}.
	  \end{split}
	\end{equation*}

\item $i=j=2$, $l=1$:
	\begin{equation*}
	  \begin{split}
	    0 &= bc\left(g^{ks}J_s^1\frac{\partial g^{22}}{\partial u^k}\right) - b^2\left(g^{k1}\frac{\partial g^{22}}{\partial u^k}\right) + c\frac{\partial b}{\partial u^k}\left(g^{ks}J_s^1g^{22}\right) - b\frac{\partial b}{\partial u^k}\left(g^{k1}g^{22}\right)  \\
	   	  &= bc\left(g^{11}J_1^1\frac{\partial g^{22}}{\partial u^1}\right) - b^2\left(g^{11}\frac{\partial g^{22}}{\partial u^1}\right) + c\frac{\partial b}{\partial u^k}\left(g^{ks}J_s^1g^{22}\right) - b\frac{\partial b}{\partial u^1}\left(g^{11}g^{22}\right)  \\
	   	  &= \frac{2\cos(u^1)}{\sin^3(u^1)}b^2 + \frac{1}{\sin^2(u^1)}\left[c\frac{\partial b}{\partial u^k}\left(g^{ks}J_s^1\right) - b\frac{\partial b}{\partial u^1}\right].
	  \end{split}
	\end{equation*}

\end{itemize}

Taking both expressions together, it is immediate to see that
	\begin{equation*}
	  \frac{2\cos(u^1)}{\sin^3(u^1)}b^2 = 0,
	\end{equation*}
so we have that $b \equiv 0$. Then, in order to be integrable, the structure must be $\J = a\J_{1,J} + c\J_\omega$ with $a^2 + c^2 = 1$. However, Proposition \ref{prop:bequal0} states that any such structure can be integrable. Therefore, there are no functions $a,b,c\in \mathfrak F(\Sph^6)$ such that $a^2 + b^2 + c^2 = 1$ and $a\J_{1,J} + b\J_g + c\J_\omega$ is a weak generalized complex structure, thus proving the theorem.
\end{proof}


\end{document}